\crefname{hypothesis}{Hypothesis}{Hypotheses}
\title{Exact convergence rate of the last iterate\\in subgradient methods%\thanks{Submitted to the editors DATE.
%\funding{This work was funded by the Fog Research Institute under contract no.~FRI-454.}
%}
}
\author{Moslem Zamani\thanks{ICTEAM/INMA, Université catholique de Louvain, B-1348 Louvain-la-Neuve, Belgium
  (\email{moslem.zamani@uclouvain.be}).}
\and Fran\c{c}ois Glineur \thanks{ICTEAM/INMA \& CORE, Université catholique de Louvain, B-1348 Louvain-la-Neuve, Belgium
  (\email{Francois.Glineur@uclouvain.be}).
  }}
\algnewcommand{\Inputs}[1]{%
  \State \textbf{Inputs:}
  \Statex \hspace*{\algorithmicindent}\parbox[t]{.8\linewidth}{\raggedright #1}
}
\algnewcommand{\Initialize}[1]{%
  \State \textbf{Initialization:}
  \Statex \hspace*{\algorithmicindent}\parbox[t]{.8\linewidth}{\raggedright #1}
}
\begin{document}

\maketitle

% REQUIRED
\begin{abstract}
We study the convergence of the last iterate in subgradient methods applied to the minimization of a nonsmooth convex function with bounded subgradients. 

We first introduce a proof technique that generalizes the standard analysis of subgradient methods. It is based on tracking the distance between the current iterate and a different reference point at each iteration. Using this technique, we obtain the exact worst-case convergence rate for the objective accuracy of the last iterate of the projected subgradient method with either constant step sizes or constant step lengths. Tightness is shown with a worst-case instance matching the established convergence rate. 

We also derive the value of the optimal constant step size when performing $N$ iterations, for which we find that the last iterate accuracy is smaller than $B R \sqrt{1+\log(N)/4}/{\sqrt{N+1}}$ %$\frac{B R \log N}{\sqrt{N+1}}$
, where $B$ is a bound on the subgradient norm and $R$ is a bound on the distance between the initial iterate and a minimizer.

Finally, we introduce a new optimal subgradient method that achieves the best possible last-iterate accuracy after a given number $N$ of iterations. Its convergence rate ${B R}/{\sqrt{N+1}}$ matches exactly the lower bound on the performance of any black-box method on the considered problem class. We also show that there is no universal sequence of step sizes that simultaneously achieves this optimal rate at each iteration, meaning that the dependence of the step size sequence in $N$ is unavoidable. 
\end{abstract}

% REQUIRED
\begin{keywords}
convex optimization, nonsmooth optimization, subgradient method, constant step size, constant step length, convergence rate, last iterate, optimal subgradient method
\end{keywords}

% REQUIRED
\begin{MSCcodes}
90C25, 90C60, 49J52
\end{MSCcodes}
%%%%%%

%\makeatletter
% 	\@starttoc{toc}
% \makeatother

\section{Introduction}\label{intro}
\subsection{Subgradient methods}

Subgradient methods are iterative techniques for solving nonsmooth convex optimization problems, studied by Shor and others in the 1960s. They are both simple and widely used, and continue to be actively studied. New variants have been recently developed that are more efficient and can handle a wider range of optimization problems, see \cite{davis2019stochastic, grimmer2023some} and the references therein.

Let $X\subseteq \mathbb{R}^n$ be a convex set and $f$ be a convex function whose domain contains $X$. Consider the following convex optimization problem 
\begin{align}\label{P}
\min_{x\in X} f(x).
\end{align}
The set of subgradients of function $f$ at a point $x$ is denoted as $\partial f(x)$, and is given by
$$
\partial f(x)=\{g \in \mathbb{R}^n \text{ such that }  f(y)\geq f(x)+\langle g, y-x\rangle \text{ holds for all }  y\in \mathrm{dom} f\}.
$$

The class of projected subgradient methods is given in Algorithm \ref{SM}, where $P_X(\cdot)$ stands for the Euclidean projection on $X$. An instance of the method requires to define the sequence of $N$ step sizes $\{ h_k \}_{1 \le k \le N}$, where $N$ is the number of iterations to perform. 

\begin{algorithm}
\caption{Projected subgradient method with generic step sizes}
\begin{algorithmic}
\smallskip
\State \textbf{Parameters:} number of iterations $N$, sequence of positive step sizes $\{ h_k \}_{1 \le k  \le N}$.
\smallskip
\State \textbf{Inputs:} convex set $X$, convex function $f$ defined on $X$, initial iterate $x^1 \in X$.
\smallskip
%\State \textbf{Parameters:} Define a number of iterations $N$ and sequence of step sizes $\{ h_k \}_{1 \le k  \le N}$.
\State For $k=1, 2, \ldots, N$ perform the following steps:\\
\begin{enumerate}
\item
Select a subgradient $g^k\in \partial f(x^k)$.
\item
Compute $x^{k+1}=P_X\left(x^k-h_kg^{k}\right)$.
\end{enumerate}
\smallskip
\State \textbf{Output:} last iterate $x_{N+1}$ 
\end{algorithmic}
\label{SM}
\end{algorithm}

%The subgradient method is usually applied when this projection can be computed efficiently.
For the method to be well-defined, we will assume the following throughout the paper:

\begin{assumption}
\mbox{}
\begin{enumerate}
\item The set of subdifferential $\partial f(x)$ is nonempty for every $x \in X$.
\item The set $X$ is closed, convex and nonempty. 
\end{enumerate}
\end{assumption}
The first assumption is necessary to compute a direction for the next iterate. It holds for example if set $X$ is contained in the relative interior of the domain of function $f$. The second assumption ensures that the projection on $X$ is well-defined and unique.

%We denote the set of closed proper convex functions by $\mathcal{F}(\mathbb{R}^n)$. 

\subsection{Convergence rates}
Unlike the gradient method, the subgradient method is not a descent method, meaning that inequality $f(x_{k+1}) \le f(x_k)$ does not necessarily hold at each iteration. For this reason, most convergence rates for the subgradient method describe the best iterate, or an average of the iterates, see for example \cite{boyd, lan, Nesterov}.
Convergence results typically require two more assumptions:
\begin{assumption}
Function $f$ has $B$-bounded subgradients on set $X$, meaning
\[  x\in X \text{ and }  g\in \partial f(x) \quad \Rightarrow \quad \| g\|\leq B.\]
\end{assumption}
Note that a convex function $f$ is Lipschitz continuous with modulus $B$ if its subgradients are $B$-bounded on its domain.
\begin{assumption}
Function $f$ admits a minimizer $x^*$, and the distance between initial iterate $x^1$ and $x^*$ is bounded by a constant $R$, that is
\[ \| x^1 - x^* \| \le R.\]
\end{assumption}
For example, under these assumptions, the best iterate of the subgradient method with generic positive step sizes $\{ h_k \}$ will satisfy (see e.g. \cite{boyd, lan})
\begin{equation} \label{ratebest} \min_{1 \le k \le N+1} f(x^k) - f(x^*) \le \frac{R^2 + B^2 \sum_{k=1}^{N+1} h_k^2}{2 \sum_{k=1}^{N+1} h_k}. \end{equation}
The same bound can be shown to also hold for the average iterate defined as \[ x^\mathrm{avg} =  \sum_{k=1}^{N+1} w_k x^k \quad \text{ with } w_k = \frac{h_k}{\sum_{k=1}^{N+1} h_k}.\]
Note that these rates depend on step size $h_{N+1}$ that is not used in the algorithm (i.e. they are valid for any value of $h_{N+1}>0$).
If we know constants $B$ and $R$, it can be shown that the optimal choice of step sizes, i.e. which minimizes the rate, consists of the following constant sequence \[ h_k = \frac{R}{B} \frac{1}{\sqrt{N+1}} \quad \text{which implies} \quad  \min_{1 \le k \le N+1} f(x^k) - f(x^*) \le \frac{BR}{\sqrt{N+1}}.  \]
A final remark is that this last result cannot be improved. Indeed, it is known that no subgradient method can guarantee a rate better than $\frac{BR}{\sqrt{N+1}}$ \cite{drori2016optimal}. This lower bound is actually valid for any black-box method that moves in a direction combining past subgradients at each step, see beginning of Section~\ref{Sec_opt} for more details.

\subsection{Rates on the final iterate}
From the above it appears that subgradient methods are both simple and efficient, matching the best possible convergence rate. Nevertheless, we observe two drawbacks: first, the optimal sequence of step sizes $h_k = \frac{R}{B}\frac{1}{\sqrt{N+1}}$ requires knowledge of constants $B$ and $R$ and, more importantly, depends on the number of iterations $N$. 

Second, these worst-case convergence rates only hold for the best or the average iterate, and nothing is guaranteed about the sequence of iterates, or in particular about the last iterate. As the final iterate is commonly selected in practice as the output of the subgradient method \cite{jain2019making}, it may be of interest to analyze the method with respect to the last generated iterate. It is also of interest in situations where the iterates cannot be stored, where the function value cannot be evaluated, or where the sequence of iterates computed by the subgradient method is itself under study.

The question of last-iterate convergence rates was previously raised in \cite{shamir2012open}. In \cite{nesterov2015quasi} the authors introduce a modified subgradient method with double averaging for which the whole sequence of iterates converges with the rate $O(\frac{1}{\sqrt{N}})$. Moving back to standard subgradient methods as described by Algorithm \ref{SM}, a lower bound of order $O(\frac{\log(N)}{\sqrt{N}})$ for the convergence rate for the last iterate is established in \cite{harvey2019tight} for a specific choice of step sizes $h_k = \frac{R}{B} \frac{1}{\sqrt{k}}$. The authors also prove a high probability upper bound with the same order $O(\frac{\log(N)}{\sqrt{N}})$ in the stochastic case. Finally, we can find in \cite{jain2019making} a subgradient method with a different choice of step sizes for which a $O(\frac{1}{\sqrt{N}})$ convergence rate for the last iterate is obtained when the feasible set $X$ is bounded. 

%\bigskip

In this paper, we continue to explore this question and contribute in two ways. First, we establish in Section \ref{Sec_f} exact convergence rates for the last iterate the subgradient method with either constant step sizes or constant step lengths. These results are based on a key lemma presented in Section \ref{Sec_1}, which generalizes the standard analysis of subgradient methods. Second, we present in  Section \ref{Sec_opt} an optimal subgradient method for which the last-iterate convergence rate matches exactly the established lower bound for black-box nonsmooth convex optimization problems, namely for which we have $f(x^N) - f(x^*) \le \frac{BR}{\sqrt{N+1}}$, improving the constant in the rate of \cite{jain2019making} by an order of magnitude.

\section{Key lemma for convergence proofs}\label{Sec_1}

%Let $f:\mathbb{R}^n\to[-\infty, \infty]$ be an extended convex function.
% The function $f$ is called proper if it does not attain the
%value $-\infty$ and its domain is non-empty, i.e. $\{x: f(x)<\infty\}\neq \emptyset$. We call $f$ closed if its epi-graph is closed,
% that is $\{(x, r): f(x)\leq r\}$ is a closed subset of $\mathbb{R}^{n+1}$.
%  We denote the
% set of subgradients of $f$ at $x$ by $\partial f(x)$,
%$$
%\partial f(x)=\{g: f(y)\geq f(x)+\langle g, y-x\rangle, \forall y\in\mathbb{R}^n\}.
%$$
%We denote the set of closed proper convex functions by $\mathcal{F}(\mathbb{R}^n)$. 

All convergence rates established in this paper will be derived from the following key lemma. Its proof is based on tracking the distance between the current iterate and a different reference point at each iteration ($\|x_k -  z_k\|$ in the proof below). 

%We conclude the section by presenting a lemma which plays a crucial role in our analysis. Indeed, we derive  by using this lemma.  

\begin{lemma}\label{P_SM}
Let $f$ be a convex function and let $X$ be a closed convex set. Suppose that  $\hat x\in X$, $h_{N+1}>0$ and  $0<v_0\leq v_1\leq \dots \leq v_N\leq v_{N+1}$. If  Algorithm \ref{SM} with the starting point $x^1 \in X$  generates $\{(x^k, g^k)\}$, then
\begin{align}\label{UR_SM}
   & \sum_{k=1}^{N+1}  \left(h_kv_{k}^2-(v_k-v_{k-1})\sum_{i=k}^{N+1}  h_iv_{i} \right) f(x^{k}) -
v_0 \sum_{k=1}^{N+1}  h_kv_{k}  f(\hat x) 
 \leq \\
 \nonumber & \ \ \ \ 
  \tfrac{v_{0}^2}{2}\left \| x^{1}-\hat x\right\|^2 +\tfrac{1}{2}\sum_{k=1}^{N+1}  h_k^2v_{k}^2 \left \|g^{k}\right\|^2.
\end{align}
 \end{lemma}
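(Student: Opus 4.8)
The plan is to generalize the standard one-step contraction argument for subgradient methods, but instead of measuring distance to a fixed optimum $x^*$, I track the squared distance to a moving reference point $z_k$ that I will choose as a convex combination of $\hat x$ and the iterates seen so far. Concretely, I would start from the familiar identity/inequality obtained from the projection step: for any $w \in X$,
\begin{align*}
\|x^{k+1} - w\|^2 &= \|P_X(x^k - h_k g^k) - w\|^2 \le \|x^k - h_k g^k - w\|^2 \\
&= \|x^k - w\|^2 - 2 h_k \langle g^k, x^k - w\rangle + h_k^2 \|g^k\|^2,
\end{align*}
and then use the subgradient inequality $\langle g^k, x^k - w\rangle \ge f(x^k) - f(w)$ to get
\begin{equation*}
\|x^{k+1} - w\|^2 \le \|x^k - w\|^2 - 2 h_k \big(f(x^k) - f(w)\big) + h_k^2 \|g^k\|^2 .
\end{equation*}
The novelty is that $w$ will be allowed to change with $k$.

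The second step is to pick the right reference points and weights. I expect that multiplying the $k$-th inequality by a weight proportional to $v_k^2$ (or $v_k$ times something) and summing telescopically is the mechanism, but the reference point $w$ in step $k$ should be $z_k := \frac{v_0}{v_k}\hat x + \sum (\text{coefficients}) x^i$ — essentially a weighted average in which $\hat x$ enters with weight $v_0/v_k$ and the earlier iterates pick up the remaining weight, the weights on $x^i$ being controlled by the increments $v_i - v_{i-1}$. Applying Jensen's inequality to $f(z_k)$ (using convexity of $f$, and feasibility $z_k \in X$ since it's a convex combination of points in $X$) converts the $f(w)$ term into a combination of $f(\hat x)$ and the $f(x^i)$. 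The cross terms $\|x^k - z_k\|^2$ versus $\|x^{k+1} - z_{k+1}\|^2$ will not telescope exactly because the reference point moves; handling that discrepancy — expanding $\|x^{k+1}-z_{k+1}\|^2$ in terms of $\|x^{k+1}-z_k\|^2$ plus correction terms involving $z_{k+1}-z_k$ — is where the monotonicity hypothesis $v_0 \le v_1 \le \dots \le v_{N+1}$ must be used to keep all correction coefficients of the right sign, and where the precise algebraic form of the left-hand side coefficients $h_k v_k^2 - (v_k - v_{k-1})\sum_{i\ge k} h_i v_i$ should emerge.

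I would then carry out the summation for $k = 1, \dots, N+1$, track the accumulated $\frac{1}{2} h_k^2 v_k^2 \|g^k\|^2$ error terms and the initial term $\frac{v_0^2}{2}\|x^1 - \hat x\|^2$ (this is $\|x^1 - z_1\|^2$ scaled appropriately, noting $z_1$ should reduce to $\hat x$), and discard the final nonnegative distance term $\|x^{N+2} - z_{N+2}\|^2 \ge 0$ on the right. Collecting the objective-value contributions on the left, grouping the coefficient of each $f(x^k)$ and of $f(\hat x)$, should produce exactly \eqref{UR_SM}. The main obstacle I anticipate is bookkeeping: getting the weights $z_k$ and the scaling factors exactly right so that (i) each $z_k$ is genuinely a convex combination (all weights nonnegative and summing to one — this is precisely where $v_{k-1} \le v_k$ is needed), and (ii) the telescoping of the moving-reference distance terms leaves behind the stated coefficient $h_k v_k^2 - (v_k - v_{k-1})\sum_{i=k}^{N+1} h_i v_i$ on $f(x^k)$ and nothing else. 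I would likely reverse-engineer the correct definition of $z_k$ by first writing down the desired final inequality and asking which convex combination makes the telescope close.
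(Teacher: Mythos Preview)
Your proposal is correct and matches the paper's approach: they set $z^0=\hat x$, define $z^k=(1-\tfrac{v_{k-1}}{v_k})x^k+\tfrac{v_{k-1}}{v_k}z^{k-1}$ recursively, combine the subgradient inequality $f(z^k)-f(x^k)\ge\langle g^k,z^k-x^k\rangle$ with non-expansiveness of the projection, and bound $f(z^k)$ above via Jensen. One refinement to your expectation: because this recursion yields $v_k(z^k-x^k)=v_{k-1}(z^{k-1}-x^k)$, the weighted quantities telescope \emph{exactly} with no correction terms, and the initial contribution $\tfrac{v_0^2}{2}\|x^1-\hat x\|^2$ arises from $z^0=\hat x$ (not $z^1=\hat x$).
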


\noindent Note that this inequality can also be equivalently written 
\[ \sum_{k=1}^{N+1}  c_k \bigl( f(x_k) - f(\hat{x}) \bigr) \le \tfrac{v_{0}^2}{2}\left \| x^{1}-\hat x\right\|^2 +\tfrac{1}{2}\sum_{k=1}^{N+1}  h_k^2v_{k}^2 \left \|g^{k}\right\|^2 \]
with coefficients $c_k$ defined as $c_k = h_kv_{k}^2-(v_k-v_{k-1})\sum_{i=k}^{N+1}  h_iv_{i}$, since one can show that $\sum_{k=1}^{N+1} c_k = v_0 \sum_{k=1}^{N+1} h_k v_k$ using summation by parts.

\noindent \begin{proof}
 Let $z^0=\hat x$ and $z^k$ is defined recursively as follows,
 \begin{align*}
   z^{k}=\left(1-\tfrac{v_{k-1}}{v_{k}}\right)x^{k}+\tfrac{v_{k-1}}{v_{k}}z^{k-1}& , \ \ \ \ k\in\{1, \dots, N+1\}.
 \end{align*}
 It is seen that $z^k$ may be written as a convex combination of $x^1, \dots, x^{N+1}, \hat x$. Indeed,
  $$
  z^k=\tfrac{1}{v_k}\sum_{i=1}^k (v_i-v_{i-1})x^i+\tfrac{v_0}{v_k}\hat x.
  $$
  By Jensen's inequality, we get
  \begin{align}\label{ineq1}
  \sum_{k=1}^{N+1}  h_kv_{k}^2 \left( f(z^{k})-f(x^{k}) \right) \leq &
   \sum_{k=1}^{N+1}\sum_{i=1}^{k}  h_kv_{k}(v_i-v_{i-1}) f(x^{i})+
   v_0 \sum_{k=1}^{N+1}  h_kv_{k}  f(\hat x) 
   \\
 \nonumber   &\ \ -\sum_{k=1}^{N+1}  h_kv_{k}^2 f(x^{k}) 
  \end{align}
On the other hand, by the subgradient inequality for $k\in\{1, \dots, N+1\}$, we have
 \begin{align*}
     f(z^{k})-f(x^{k}) &\geq \left\langle \sqrt{h_k}g^{k}, \tfrac{1}{\sqrt{h_k}} ( z^{k}-x^{k})\right\rangle\\
     &= \tfrac{h_k}{2}\left( \left \|g^{k}+ \tfrac{1}{h_k} ( z^{k}-x^{k})\right\|^2- 
       \tfrac{1}{h^2_k}\left \| z^{k}-x^{k}\right\|^2-
       \left \|g^{k}\right\|^2
      \right)\\
      &=
      \tfrac{h_k}{2}\left( \left \|g^{k}+ \tfrac{1}{h_k} ( z^{k}-x^{k})\right\|^2- 
      \tfrac{v_{k-1}^2}{h^2_k v_{k}^2}\left \| z^{k-1}-x^{k}\right\|^2-
       \left \|g^{k}\right\|^2
      \right)
 \end{align*}
 Due to the non-expansive property of the projection operator, we have  $\|P_X(x^k-h_kg^k)-y \|\leq \|(x^k-h_kg^k)-y \| $ for any $y\in X$. Thus, we get 
  \begin{align*}
     f(z^{k})-f(x^{k}) \geq &
      \tfrac{h_k}{2}\left \|g^{k}+ \tfrac{1}{h_k} ( z^{k}-x^{k})\right\|^2-
            \tfrac{v_{k-1}^2h_{k-1}^2}{2v_{k}^2 h_k}\left \| g^{k-1}+\tfrac{1}{h_{k-1}}(z^{k-1}-x^{k-1})\right\|^2
            \\
            & \ \ - \tfrac{h_k}{2}\left \|g^{k}\right\|^2,
 \end{align*}
 for $k\in\{2, \dots, N+1\}$. Hence, 
   \begin{align*}
     2h_kv_{k}^2 \left( f(z^{k})-f(x^{k}) \right) \geq 
    & -{h_k^2v_{k}^2}\left \|g^{k}\right\|^2
      +{h_k^2v_{k}^2}\left \|g^{k}+ \tfrac{1}{h_k} ( z^{k}-x^{k})\right\|^2
      \\
     & \ \  - {v_{k-1}^2h_{k-1}^2}\left \| g^{k-1}+\tfrac{1}{h_{k-1}}(z^{k-1}-x^{k-1})\right\|^2.
 \end{align*}
 Moreover,
 \begin{align*}
 2h_1v_{1}^2 \left(  f(z^{1})-f(x^{1}) \right) \geq 
       {h_1^2 v_{1}^2}\left \|g^{1}+ \tfrac{1}{h_1} ( z^{1}-x^{1})\right\|^2- 
      {v_{0}^2}\left \| z^0-x^{1}\right\|^2-
       {h_1^2 v_{1}^2}\left \|g^{1}\right\|^2.
 \end{align*}
 By summing  up these inequalities, we obtain 
 \begin{align}\label{ineq2}
 2\sum_{k=1}^{N+1}  h_kv_{k}^2 \left( f(z^{k})-f(x^{k}) \right) \geq &
 {h_{N+1}^2v_{N+1}^2}\left \|g^{N+1}+ \tfrac{1}{h_{N+1}} ( z^{N+1}-x^{N+1})\right\|^2
 \\
\nonumber & \ \  -{v_{0}^2}\left \| z^0-x^{1}\right\|^2-
 \sum_{k=1}^{N+1}  h_k^2v_{k}^2 \left \|g^{k}\right\|^2.
 \end{align}
 Inequalities \eqref{ineq1} and   \eqref{ineq2} imply the desired inequality and the proof is complete.
\end{proof}

Compared to the standard analysis of subgradient methods, additional flexibility is provided by the sequence of weights $\{ v_k \}$. Note that by setting $v_k=1$ for all $k$ and $\hat x=x^\star$ in \eqref{UR_SM}, we get 
\begin{align}\label{C_ineq}
    \sum_{k=1}^{N+1}  h_k \left(f(x^{k}) -f^\star\right)
 \leq 
  \tfrac{1}{2}\left \| x^{1}-x^\star\right\|^2 +\tfrac{1}{2}\sum_{k=1}^{N+1}  h_k^2 \left \|g^{k}\right\|^2.
\end{align}
from which it is straightforward to derive the standard convergence rate \eqref{ratebest} with respect to the best objective value or the average of iterates \cite{boyd, lan}.

In order to establish a last-iterate convergence rate via Lemma \ref{P_SM}, one should choose appropriate values for the $N+3$ parameters, $v_0, \dots, v_{N+1}$ and $h_{N+1}$, so that coefficients $c_k$ are zero for all $k$ except $c_{N+1}$. One can actually see with some algebra that all parameters are uniquely determined if we assign values to $v_{N+1}$, $h_{N+1}$ and the coefficient $c_{N+1}$ of $f(x^{N+1})$ in \eqref{UR_SM}. 
%%%%%%%%%%%%%%%%%%%%%%%%%%%%%%%%%%%%%%%%%%%%%%%%%%%%%%%%%%%%%%
\section{Subgradient method with constant step sizes}\label{Sec_f}

In this section, we investigate the convergence rate of Algorithm \ref{SM} when the step size is constant for each iteration. Moreover, following the standard presentation of such convergence results, we assume that this constant step size is chosen proportionally to the ratio $\frac{R}{B}$, namely we define $h_k=\tfrac{hR}{B}$ ($k=1, \dots, N$) for some $h>0$. This normalization leads to slightly simpler expressions for the rates, which become proportional to a common factor $B R$. 

\subsection{Increasing sequences $\{s_{\alpha, k}\}_{k \ge 1}$}

Before we prove our results we need to introduce a family of real sequences.  Let $\alpha \ge 1$ be a real parameter. We define the sequence $\{s_{\alpha, k}\}_{k \ge 1}$ recursively as follows
\begin{align}\label{Rseq}
s_{\alpha, 1}=\alpha, \ \ \ \ s_{\alpha, {k+1}}=s_{\alpha, k}+\tfrac{1}{s_{\alpha, k}}.
\end{align}
The next proposition lists some properties of these sequences that will be used later.

\begin{proposition}\label{P_RS}
Any sequence $\{s_{\alpha, k}\}$ defined by \eqref{Rseq} satisfies the following for all $k$:
\begin{enumerate}[(a)]
  \item
   $
  s_{\alpha, k+1} = \alpha + \sum_{i=1}^{k}\tfrac{1}{s_{\alpha, i}}
  $
  \item
  $
  s_{\alpha, k+1}^2= \alpha^2 + 2k + \sum_{i=1}^{k}\tfrac{1}{s_{\alpha, i}^2}
  $
  \item
  $\beta > \alpha $ implies $s_{\beta, k} > s_{\alpha, k}$
    \item
    $
  \lim_{\alpha\to+\infty} s_{\alpha, k}=+\infty.
  $
\end{enumerate}
\end{proposition}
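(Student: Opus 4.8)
The plan is to prove parts (a)–(d) essentially in order, since each builds on the previous ones. For part (a), I would simply unroll the recursion $s_{\alpha,k+1} = s_{\alpha,k} + \tfrac{1}{s_{\alpha,k}}$ by induction on $k$: the base case $k=1$ reads $s_{\alpha,2} = \alpha + \tfrac{1}{s_{\alpha,1}} = \alpha + \tfrac1\alpha$, which is immediate, and the induction step adds one more term $\tfrac{1}{s_{\alpha,k+1}}$ to both sides. For part (b), the natural idea is to square the recursion: $s_{\alpha,k+1}^2 = s_{\alpha,k}^2 + 2 + \tfrac{1}{s_{\alpha,k}^2}$, so that each step contributes exactly $2$ plus the reciprocal-square term. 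Summing this telescoping identity from $1$ to $k$ and using $s_{\alpha,1}^2 = \alpha^2$ gives $s_{\alpha,k+1}^2 = \alpha^2 + 2k + \sum_{i=1}^k \tfrac{1}{s_{\alpha,i}^2}$, again formalized by a one-line induction.

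For part (c), I would argue by induction on $k$ that $\beta > \alpha$ implies $s_{\beta,k} > s_{\alpha,k} > 0$. The base case is the definition $s_{\beta,1} = \beta > \alpha = s_{\alpha,1}$. For the step, I need the map $t \mapsto t + \tfrac1t$ to be increasing on the relevant range; since all iterates stay at least $\alpha \ge 1$ (indeed the sequence is nondecreasing and bounded below by its first term $\alpha\ge 1$), and $t + \tfrac1t$ is increasing for $t \ge 1$, monotonicity of the recursion in its argument propagates the strict inequality. The only mild care needed is to record first that $s_{\alpha,k} \ge \alpha \ge 1$ for all $k$, which itself follows by induction from the recursion (adding a positive quantity keeps the sequence nondecreasing).

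For part (d), the cleanest route is to use part (b): since every term $\tfrac{1}{s_{\alpha,i}^2}$ is nonnegative, we have the lower bound $s_{\alpha,k+1}^2 \ge \alpha^2 + 2k \ge \alpha^2$, hence $s_{\alpha,k} \ge \alpha$ for all $k$ (a fact already used above), and therefore $\lim_{\alpha \to +\infty} s_{\alpha,k} \ge \lim_{\alpha\to+\infty}\alpha = +\infty$ for each fixed $k$. I do not expect any real obstacle here: the statement is a collection of elementary facts, and the only point requiring a moment's attention is making the monotonicity argument in (c) rigorous by first establishing the uniform lower bound $s_{\alpha,k}\ge 1$ so that $t\mapsto t+1/t$ is genuinely increasing on the domain where it is applied.
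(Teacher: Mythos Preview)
Your proposal is correct and follows essentially the same approach as the paper: telescoping for (a), squaring then telescoping for (b), monotonicity of $t\mapsto t+\tfrac1t$ on $[1,\infty)$ for (c), and the lower bound $s_{\alpha,k}\ge \alpha$ for (d). The only cosmetic difference is that for (d) the paper obtains $s_{\alpha,k}\ge s_{\alpha,1}=\alpha$ directly from the fact that each sequence is increasing in $k$, whereas you extract it from part (b); both routes are equivalent and your added care in (c) about the lower bound $s_{\alpha,k}\ge 1$ is a welcome clarification.
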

\begin{proof}
\mbox{}
\begin{enumerate}[(a)] \item This follows from telescoping in the sum of defining equalities $s_{\alpha, {i+1}}=s_{\alpha, i}+\tfrac{1}{s_{\alpha, i}}$ for $i$ ranging from $1$ to $k$.
\item  Squaring the defining equality gives   $s_{\alpha, {i+1}}^2 =(s_{\alpha, i}+\tfrac{1}{s_{\alpha, i}})^2 = s_{\alpha, i}^2+ 2 + \tfrac{1}{s_{\alpha, i}^2}$. Summing for $i$ ranging from $1$ to $k$ and telescoping provides the result.
\item This follows from the fact that $s \mapsto  s+\frac{1}{s}$ is strictly increasing when $s \ge 1$.
\item This follows from $\lim_{\alpha\to\infty} s_{\alpha, 1} = +\infty$ and the fact that each sequence $\{ s_{\alpha,k}\}$ is strictly increasing.
\end{enumerate}
\end{proof}
The sequence in the particular case $\alpha=1$ will play a central role in our convergence rates. We denote $\{s_{1, k}\}$ by $\{s_k\}$ for convenience, meaning that \[
s_{1}=1, \qquad s_{{k+1}}=s_{k}+\tfrac{1}{s_{k}} \] and provide the following estimate of its asymptotic behavior.

\begin{lemma}\label{L_RS}
For any $k\geq 2$ we have \[ \sqrt{2k} \le s_{k} \le \sqrt{2k+\tfrac{1}{2}\log{(k-1)}}.\]
\end{lemma}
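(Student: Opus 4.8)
The plan is to use Proposition~\ref{P_RS}(b) applied with $\alpha = 1$, which gives the exact identity
\[ s_{k+1}^2 = 1 + 2k + \sum_{i=1}^{k} \frac{1}{s_i^2}, \qquad \text{equivalently} \qquad s_k^2 = 2k - 1 + \sum_{i=1}^{k-1} \frac{1}{s_i^2} \]
for $k \ge 1$ (interpreting the empty sum as zero when $k=1$). Since every term $1/s_i^2$ is positive and $s_1 = 1$, we immediately get $s_k^2 \ge 2k - 1 + 1 = 2k$ for $k \ge 2$ (the $i=1$ term alone contributes $1$), which establishes the lower bound $s_k \ge \sqrt{2k}$. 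This is the easy half.

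For the upper bound I would bootstrap: use the lower bound just obtained to control the correction term $\sum_{i=1}^{k-1} 1/s_i^2$. First I would handle small indices separately (note $s_1 = 1$, $s_2 = 2$, $s_3 = 5/2$), so that $1/s_1^2 = 1$ and for $i \ge 2$ we have $1/s_i^2 \le 1/(2i)$. Then for $k \ge 2$,
\[ s_k^2 = 2k - 1 + \sum_{i=1}^{k-1}\frac{1}{s_i^2} \le 2k - 1 + 1 + \sum_{i=2}^{k-1}\frac{1}{2i} = 2k + \frac{1}{2}\sum_{i=2}^{k-1}\frac{1}{i}. \]
Finally I would bound the harmonic-type sum $\sum_{i=2}^{k-1} 1/i$ by an integral: $\sum_{i=2}^{k-1} \frac{1}{i} \le \int_1^{k-1} \frac{dx}{x} = \log(k-1)$. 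Combining gives $s_k^2 \le 2k + \frac{1}{2}\log(k-1)$, hence $s_k \le \sqrt{2k + \frac{1}{2}\log(k-1)}$, as claimed.

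The only mild subtlety — and the place to be careful rather than a genuine obstacle — is the bookkeeping at the low end of the range: the bound $1/s_i^2 \le 1/(2i)$ fails at $i=1$ (there $1/s_1^2 = 1 > 1/2$), which is exactly why I peel off the $i=1$ term explicitly and start the comparison sum at $i=2$; one then checks the base case $k=2$ by hand ($s_2^2 = 4 \le 4 + \frac{1}{2}\log 1 = 4$, with equality) so the integral comparison, valid for $k \ge 3$, does not need to cover it. Everything else is routine. Note also that the same argument, with the weaker estimate $\sum_{i=2}^{k-1} 1/i \le \log(k-1)$ replaced by nothing, already recovers the looser bound $s_k \le \sqrt{2k + \frac12\log k}$ should that be more convenient downstream.
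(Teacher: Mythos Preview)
Your proposal is correct and follows essentially the same route as the paper: both derive the lower bound $s_k^2\ge 2k$ from the recursion (you via Proposition~\ref{P_RS}(b) and the $i=1$ term, the paper via the equivalent induction $s_{i+1}^2\ge s_i^2+2$ from $s_2=2$), and both obtain the upper bound by peeling off $1/s_1^2=1$, using $1/s_i^2\le 1/(2i)$ for $i\ge 2$, and bounding $\sum_{i=2}^{k-1}1/i\le\log(k-1)$. Your careful handling of the base case $k=2$ is if anything more explicit than the paper's.
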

\begin{proof}
  To prove the left inequality, since $s_{i+1}^2 = s_i^2 + \frac{1}{s_i^2} + 2 \ge s_i^2 + 2$, we have by induction that $s_k^2 \ge s_2^2 + 2(k-2) = 2k$ (using $s_2=2$).
    To prove the right inequality, we use (b) in Proposition \ref{P_RS} to obtain 
 \begin{align*}
    s_{k}^2=1^2 + 2(k-1) + \sum_{i=1}^{k-1}\tfrac{1}{s_i^2} \leq 2k-1 + (1 + \tfrac{1}{2}\sum_{i=2}^{k-1}\tfrac{1}{i}) \leq 2k + \tfrac12  \log(k-1),
 \end{align*}
where the first inequality follows from $s_k^2 \ge 2k$, and the second from the upper bound on harmonic numbers $\sum_{i=1}^k \frac{1}{i} \le \log(k)+1$.
\end{proof}

\subsection{Convergence rate for the last iterate}
We now turn to proving a convergence rate for the last iterate of the subgradient method with constant step sizes. With the choice of constant step size explained above $h_k = \frac{R}{B} h$ for some positive parameter $h$, the subgradient algorithm becomes
\begin{algorithm}
\caption{Projected subgradient method with constant step sizes}
\begin{algorithmic}
\smallskip
\State \textbf{Parameters:} number of iterations $N$, normalized step size parameter $h>0$
\smallskip
\State \textbf{Inputs:} convex set $X$, convex function $f$ defined on $X$ with $B$-bounded subgradients, initial iterate $x^1 \in X$ satisfying $\|x^1 - x^*\| \le R$ for some minimizer $x^*$.
\smallskip
%\State \textbf{Parameters:} Define a number of iterations $N$ and sequence of step sizes $\{ h_k \}_{1 \le k  \le N}$.
\State For $k=1, 2, \ldots, N$ perform the following steps:\\
\begin{enumerate}
\item
Select a subgradient $g^k\in \partial f(x^k)$.
\item
Compute $x^{k+1}=P_X\left(x^k- h \frac{R}{B} g^{k}\right)$.
\end{enumerate}
\smallskip
\State \textbf{Output:} last iterate $x_{N+1}$ 
\end{algorithmic}
\label{SMC}
\end{algorithm}

Most of the effort in obtaining our convergence rate will be spent in obtaining the following lemma. 

\begin{lemma}\label{P_SM1}
Let $f$ be a convex function with $B$-bounded sugradients on a convex set $X$, and let $\alpha\geq 1$. Let $\hat{x} \in X$ be a reference point. Consider $N$ iterations of Algorithm \ref{SMC} with step size parameter $h>0$, starting from an initial iterate $x^1 \in X$ satisfying $\|x^1 - \hat{x} \| \le R$. We have that the last iterate $x_{N+1}$ satisfies
\begin{align}\label{UR_SM1}
f(x^{N+1})-f(\hat x) \leq BR\left( \tfrac{1}{2}\left(s_{\alpha, N+1} \sqrt{h}-\tfrac{1}{s_{\alpha, N+1} \sqrt{h}}\right)^2+1-Nh\right).
\end{align}
\end{lemma}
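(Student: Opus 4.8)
The goal is to specialize the key Lemma~\ref{P_SM} to the constant-step-size setting $h_k=\tfrac{hR}{B}$, tuned so that all coefficients $c_k$ vanish except $c_{N+1}$. As the remark after Lemma~\ref{P_SM} indicates, the parameters $v_0,\dots,v_{N+1}$ and the fictitious step $h_{N+1}$ are uniquely pinned down once we fix $v_{N+1}$, $h_{N+1}$ and $c_{N+1}$; the plan is to guess the right choice in terms of the sequence $\{s_{\alpha,k}\}$ from \eqref{Rseq} and verify it. Concretely, I would look for $v_k$ proportional to $s_{\alpha,N+2-k}$ (reversed index), so that the recursion $s_{\alpha,k+1}=s_{\alpha,k}+1/s_{\alpha,k}$ becomes exactly the condition $c_k=0$. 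Let me set this up: with $h_k=\tfrac{hR}{B}$ for $k=1,\dots,N+1$ (taking the fictitious $h_{N+1}$ equal to the others), the condition $c_k = h_kv_k^2-(v_k-v_{k-1})\sum_{i=k}^{N+1}h_iv_i=0$ for $k\le N$ reads $v_k^2 = (v_k-v_{k-1})\sum_{i=k}^{N+1}v_i$.

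\textbf{Identifying the weights.} I would define, for a suitable scaling constant $\lambda$, $v_k = \lambda\, s_{\alpha,N+2-k}$ for $k=0,\dots,N+1$, so that $v_{N+1}=\lambda s_{\alpha,1}=\lambda\alpha$ and $v_0 = \lambda s_{\alpha,N+2}$; note $0<v_0\le v_1\le\cdots\le v_{N+1}$ since $\{s_{\alpha,k}\}$ is increasing (Proposition~\ref{P_RS}(c) and the recursion shows $s_{\alpha,k}\ge\alpha\ge1$ increasing), as required by Lemma~\ref{P_SM}. The telescoping identity $s_{\alpha,k+1}=\alpha+\sum_{i=1}^{k}1/s_{\alpha,i}$ (Proposition~\ref{P_RS}(a)) gives a closed form for $\sum_{i=k}^{N+1}v_i$, and then the recursion $s_{\alpha,j+1}^2 = s_{\alpha,j}^2+2+1/s_{\alpha,j}^2$ — equivalently Proposition~\ref{P_RS}(b) — should make the $c_k=0$ conditions hold identically. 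For the surviving coefficient I would compute $c_{N+1}=h_{N+1}v_{N+1}^2 - (v_{N+1}-v_N)\,h_{N+1}v_{N+1}$, which simplifies using $v_{N+1}-v_N = \lambda(s_{\alpha,1}-s_{\alpha,2}) = -\lambda/s_{\alpha,1}$, i.e. $c_{N+1}=\tfrac{hR}{B}v_{N+1}(v_{N+1}-(v_{N+1}-v_N)) = \tfrac{hR}{B}v_{N+1}v_N > 0$. Dividing \eqref{UR_SM} through by $c_{N+1}$ then yields a bound on $f(x^{N+1})-f(\hat x)$.

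\textbf{Assembling the bound.} After dividing by $c_{N+1}$, the right-hand side becomes $\tfrac{1}{c_{N+1}}\bigl(\tfrac{v_0^2}{2}\|x^1-\hat x\|^2 + \tfrac12\sum_{k=1}^{N+1}h_k^2v_k^2\|g^k\|^2\bigr)$. Using $\|x^1-\hat x\|\le R$, $\|g^k\|\le B$, and $h_k=\tfrac{hR}{B}$, this is at most $\tfrac{R^2}{2c_{N+1}}\bigl(v_0^2 + \tfrac{h^2}{B^2}\cdot B^2\sum_{k=1}^{N+1}v_k^2\bigr) = \tfrac{R^2}{2c_{N+1}}\bigl(v_0^2 + h^2\sum_{k=1}^{N+1}v_k^2\bigr)$. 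Now $\sum_{k=1}^{N+1}v_k^2 = \lambda^2\sum_{j=1}^{N+1}s_{\alpha,j}^2$, and Proposition~\ref{P_RS}(b) applied telescopically gives $\sum_{j=1}^{N+1}s_{\alpha,j}^2$ in terms of $s_{\alpha,N+2}^2$: indeed summing $s_{\alpha,j+1}^2 - s_{\alpha,j}^2 = 2 + 1/s_{\alpha,j}^2$ and comparing shows $\sum_{j=1}^{N+1}s_{\alpha,j}^2 = s_{\alpha,N+2}^2 \cdot(\text{something}) $ — more precisely I expect $h^2\sum v_k^2$ to combine with $v_0^2$ to produce the neat closed form. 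The factor $\tfrac{R^2}{2c_{N+1}}$ carries $\tfrac{B}{hR v_{N+1}v_N} = \tfrac{B}{hR\lambda^2 s_{\alpha,1}s_{\alpha,2}}$, so the $\lambda^2$ cancels and we are left with an expression in $h$, $\alpha$, $N$, $B$, $R$ and the $s_{\alpha,k}$'s. The target form $BR\bigl(\tfrac12(s_{\alpha,N+1}\sqrt h - \tfrac{1}{s_{\alpha,N+1}\sqrt h})^2 + 1 - Nh\bigr) = BR\bigl(\tfrac{h}{2}s_{\alpha,N+1}^2 - 1 + \tfrac{1}{2h s_{\alpha,N+1}^2} + 1 - Nh\bigr) = BR\bigl(\tfrac{h}{2}s_{\alpha,N+1}^2 - Nh + \tfrac{1}{2hs_{\alpha,N+1}^2}\bigr)$ tells me exactly what the algebra must collapse to, and in particular that $s_{\alpha,N+2}$ should get rewritten as $s_{\alpha,N+1}$ via one more application of the recursion, with $s_{\alpha,N+1}^2 = \alpha^2 + 2N + \sum_{i=1}^{N}1/s_{\alpha,i}^2$ from Proposition~\ref{P_RS}(b) accounting for the $-Nh$ term.

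\textbf{Main obstacle.} The substantive work is the bookkeeping: verifying that the chosen weights make $c_k=0$ for $k=1,\dots,N$ exactly (not approximately) — this is where the specific recursion defining $s_{\alpha,k}$ is essential and the reversed indexing has to be tracked carefully through the inner sum $\sum_{i=k}^{N+1}h_iv_i$ — and then the telescoping simplification of $v_0^2 + h^2\sum_{k=1}^{N+1}v_k^2$ down to a multiple of $s_{\alpha,N+1}^2$ plus lower-order pieces that reconstitute the $1-Nh$ and the $\tfrac{1}{2hs_{\alpha,N+1}^2}$ terms. I expect one mild subtlety in handling the boundary term $c_{N+1}$ and the leftover $\|g^{N+1}+\tfrac{1}{h_{N+1}}(z^{N+1}-x^{N+1})\|^2$ that was dropped (nonnegatively) in \eqref{ineq2} — one must make sure that dropping it is in the favorable direction, which it is since $c_{N+1}>0$ and the inequality \eqref{UR_SM} already incorporated this. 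Everything else is routine algebra once the ansatz $v_k=\lambda s_{\alpha,N+2-k}$ is in place.
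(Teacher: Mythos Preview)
Your overall plan---specialize Lemma~\ref{P_SM} with weights tuned to kill $c_1,\dots,c_N$ and then simplify via Proposition~\ref{P_RS}---matches the paper, but the concrete ansatz is wrong. You take $v_k=\lambda\,s_{\alpha,N+2-k}$; since $\{s_{\alpha,j}\}$ is increasing this makes $\{v_k\}$ \emph{decreasing} (you even compute $v_{N+1}-v_N=-\lambda/s_{\alpha,1}<0$ a few lines later), contradicting both your monotonicity claim and the hypothesis $v_0\le\cdots\le v_{N+1}$ of Lemma~\ref{P_SM}, which is needed there for the Jensen step. Worse, the equations $c_k=0$ cannot hold with this choice: with equal step sizes they read $v_k^2=(v_k-v_{k-1})\sum_{i\ge k}v_i$, and for a decreasing sequence the right-hand side is negative while the left is positive. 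So the proof as written does not go through, and the ``main obstacle'' is not bookkeeping but the ansatz itself.

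The fix is to take reciprocals: set $v_k=1/s_{\alpha,N+1-k}$ for $k=0,\dots,N$ and $v_{N+1}=s_{\alpha,1}=\alpha$. This sequence is increasing, and Proposition~\ref{P_RS}(a) collapses the inner sum to $\sum_{i=k}^{N+1}v_i=\alpha+\sum_{m=1}^{N+1-k}\tfrac{1}{s_{\alpha,m}}=s_{\alpha,N+2-k}$. Then
\[
(v_k-v_{k-1})\sum_{i\ge k}v_i=\Bigl(\tfrac{1}{s_{\alpha,N+1-k}}-\tfrac{1}{s_{\alpha,N+2-k}}\Bigr)s_{\alpha,N+2-k}=\tfrac{s_{\alpha,N+2-k}}{s_{\alpha,N+1-k}}-1=\tfrac{1}{s_{\alpha,N+1-k}^2}=v_k^2,
\]
so $c_k=0$ for $k\le N$ exactly. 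The boundary terms give $c_{N+1}=\tfrac{hR}{B}v_{N+1}v_N=\tfrac{hR}{B}$ and $v_0\sum_k h_kv_k=\tfrac{hR}{B}$, and after bounding $\|x^1-\hat x\|\le R$ and $\|g^k\|\le B$ the right-hand side of \eqref{UR_SM} divided by $c_{N+1}$ becomes $BR\bigl(\tfrac{h}{2}(\alpha^2+\sum_{j=1}^N 1/s_{\alpha,j}^2)+\tfrac{1}{2hs_{\alpha,N+1}^2}\bigr)$; Proposition~\ref{P_RS}(b) turns $\alpha^2+\sum_{j=1}^N 1/s_{\alpha,j}^2$ into $s_{\alpha,N+1}^2-2N$, yielding \eqref{UR_SM1} directly. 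In short, the \emph{reciprocals} of the $s_{\alpha,j}$ are the weights, while the $s_{\alpha,j}$ themselves are the partial sums---the two roles are exactly reversed from your guess.
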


\begin{proof}
 To prove inequality \eqref{UR_SM1}, we employ Lemma \ref{P_SM}. Assume that
$$
v_k=\frac{1}{s_{\alpha, N+1-k}}, \ \ k\in\{ 0, 1, \dots, N\},
$$
and $v_{N+1}=s_{\alpha, 1}$. It is seen that $0<v_0\leq v_1\leq \dots \leq v_{N+1}$. Suppose that $h_{N+1}=\tfrac{hR}{B}$. By using Proposition \ref{P_RS}, one can verify that for $k\in\{1, \dots, N\}$, 
\begin{align*}
 v_{k}^2 -( v_k-v_{k-1} ) \sum_{i=k}^{N+1}  v_{i}
 &=\frac{1}{s^2_{\alpha, {N+1-k}}} 
 -\left( \tfrac{1}{s_{\alpha, {N+1-k}}}-\tfrac{1}{s_{\alpha, {N+2-k}}} \right)
 \left( \sum_{i=1}^{N+1-k}  \tfrac{1}{s_{\alpha, {i}}}+s_{\alpha, 1} \right)\\
 & =\frac{1}{s^2_{\alpha,{N+1-k}}} -\left( \tfrac{s_{\alpha,{N+2-k}} }{s_{\alpha,{N+1-k}} }-1 \right)=0.
\end{align*}
Furthermore,
\begin{align*}
& v_{N+1}^2 -v_{N+1}(v_{N+1}-v_{N})=s_{\alpha, 1}(\tfrac{1}{s_{\alpha, 1}})=1, \\
& v_0 \sum_{k=1}^{N+1}  v_{k}=\tfrac{1}{s_{\alpha, {N+1}} } \left(\sum_{k=1}^{N} \tfrac{1}{s_{\alpha, {1}} }+s_{\alpha, 1}\right)=1. 
\end{align*}
Hence, by  Lemma \ref{P_SM}, we obtain
\begin{align*}
f(x^{N+1})-f^\star &
 \leq  \tfrac{Rh}{2B}\sum_{i=1}^{N+1} v_k^2\left\|g^i\right\|^2+
\tfrac{Bv_0}{2Rh}\left\|x^1-x^\star\right\|^2
 \\ & \leq
  \tfrac{BRh}{2}\sum_{i=1}^{N} \frac{1}{s_{\alpha, {N+1-k}}^2 }+ \frac{BRh\alpha^2}{2}+
\frac{BR}{2hs_{\alpha, {N+1}}^2}\\
&=BR\left( \tfrac{1}{2}\left(s_{\alpha, {N+1}} \sqrt{t}-\tfrac{1}{s_{ \alpha, {N+1}} \sqrt{t}}\right)^2+1-Nh\right),
\end{align*}
where the last equality follows from Proposition \ref{P_RS}. Hence, we derive the desired inequality and the proof is complete.
\end{proof}

The following theorem now uses Lemma \ref{P_SM1} with the choice $\hat x = x^*$ to obtain a last-iterate convergence rate for the subgradient method with constant step sizes. 

\begin{theorem}\label{T2}
Let $f$ be a convex function with $B$-bounded sugradients on a convex set $X$. Consider $N$ iterations of Algorithm \ref{SMC} with step size parameter $h>0$, starting from an initial iterate $x^1 \in X$ satisfying $\|x^1 - x^*\| \le R$ for some minimizer $x^*$. 
We have that the last iterate $x_{N+1}$ satisfies
\[ f(x^{N+1})-f^\star\leq \begin{cases} BR\left( 1-Nh \right) & \text{ when } h \le \tfrac{1}{s_{N+1}^2}, 
\\ BR\left( (\tfrac{1}{2} s^2_{N+1}-N)h+ \tfrac{1}{2s^2_{N+1}h}\right) & \text{ when } h > \tfrac{1}{s_{N+1}^2}.\end{cases} \]
\end{theorem}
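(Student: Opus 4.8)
The plan is to apply Lemma~\ref{P_SM1} with the choice $\hat x = x^*$, which gives, for every $\alpha \ge 1$,
\[ f(x^{N+1}) - f^\star \le BR\left( \tfrac12\left(s_{\alpha, N+1}\sqrt{h} - \tfrac{1}{s_{\alpha, N+1}\sqrt{h}}\right)^2 + 1 - Nh\right), \]
and then to optimize the right-hand side over the free parameter $\alpha \ge 1$. Expanding the square, the bracketed expression equals $\tfrac12 s_{\alpha, N+1}^2 h + \tfrac{1}{2 s_{\alpha, N+1}^2 h} - Nh$, which, viewed as a function of the single quantity $u := s_{\alpha, N+1}^2 \ge s_{1,N+1}^2 = s_{N+1}^2$, is $g(u) = \tfrac{h}{2} u + \tfrac{1}{2hu} + 1 - Nh$. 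So the task reduces to minimizing $g$ over the admissible range of $u$, and then to identifying which values of $u$ are actually attainable as $s_{\alpha,N+1}^2$.

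The key structural facts I would invoke are: (i) by Proposition~\ref{P_RS}(c)–(d), the map $\alpha \mapsto s_{\alpha, N+1}$ is strictly increasing and continuous on $[1,\infty)$ with $s_{1,N+1} = s_{N+1}$ and $\lim_{\alpha\to\infty} s_{\alpha,N+1} = +\infty$; hence $u = s_{\alpha,N+1}^2$ ranges continuously over the whole interval $[s_{N+1}^2, +\infty)$. (Continuity in $\alpha$ is immediate since each $s_{\cdot,k}$ is obtained from $\alpha$ by finitely many compositions of continuous maps.) (ii) The function $g(u) = \tfrac{h}{2}u + \tfrac{1}{2hu}$ is convex on $(0,\infty)$ with unconstrained minimizer $u^* = 1/h$ and minimum value $1$. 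Now split into two cases according to whether $u^* = 1/h$ lies in the admissible interval $[s_{N+1}^2, \infty)$:

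\emph{Case $h \le 1/s_{N+1}^2$.} Then $1/h \ge s_{N+1}^2$, so the unconstrained minimizer $u^* = 1/h$ is admissible; choosing $\alpha$ so that $s_{\alpha,N+1}^2 = 1/h$ (possible by fact (i)) makes the bracket equal $1 - Nh$, giving $f(x^{N+1}) - f^\star \le BR(1 - Nh)$.

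\emph{Case $h > 1/s_{N+1}^2$.} Then $1/h < s_{N+1}^2$, so $g$ is increasing on the admissible interval $[s_{N+1}^2,\infty)$ and its minimum over that interval is attained at the left endpoint $u = s_{N+1}^2$, i.e. at $\alpha = 1$. Plugging $s_{\alpha,N+1} = s_{N+1}$ into the bracket gives $\tfrac12 s_{N+1}^2 h + \tfrac{1}{2 s_{N+1}^2 h} - Nh = (\tfrac12 s_{N+1}^2 - N)h + \tfrac{1}{2 s_{N+1}^2 h}$, hence $f(x^{N+1}) - f^\star \le BR\bigl((\tfrac12 s_{N+1}^2 - N)h + \tfrac{1}{2 s_{N+1}^2 h}\bigr)$.

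Combining the two cases yields exactly the claimed piecewise bound. There is no real obstacle here — Lemma~\ref{P_SM1} does all the heavy lifting; the only point requiring a touch of care is justifying that every $u \in [s_{N+1}^2,\infty)$ is realizable as $s_{\alpha,N+1}^2$ for some admissible $\alpha \ge 1$, which follows from monotonicity, continuity, and the limit in Proposition~\ref{P_RS}, so that the infimum over $\alpha$ of the right-hand side of \eqref{UR_SM1} is genuinely attained (or approached) at the indicated value.
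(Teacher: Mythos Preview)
Your proposal is correct and follows essentially the same approach as the paper: apply Lemma~\ref{P_SM1} with $\hat x = x^*$ and choose $\alpha$ so that $s_{\alpha,N+1}^2 = 1/h$ when $h \le 1/s_{N+1}^2$ (possible by Proposition~\ref{P_RS}), and $\alpha=1$ otherwise. Your write-up is a bit more explicit about why these choices are optimal (via the convex function $g(u)$ and the attainable range of $u$), but note a harmless typo: your displayed $g(u)$ has a spurious ``$+1$'' that your subsequent computations correctly omit.
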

\begin{proof}
  We prove the theorem by plugging a suitable value for $\alpha$ into inequality \eqref{UR_SM1} written for the choice $\hat x = x^*$ , since it holds for any $\alpha\geq 1$.
 In the first case, when $h \le \tfrac{1}{s_{N+1}^2}$, we may select by Proposition \ref{P_RS} a value of $\alpha\geq 1$ such that
  $$
  s_{\alpha, N+1} \sqrt{h}-\tfrac{1}{s_{\alpha, N+1} \sqrt{h}}=0
  $$
  which leads to the desired inequality. In the second case where $h> \tfrac{1}{s_{N+1}^2}$ one can choose $\alpha=1$ to obtain the inequality. \end{proof}

It is interesting to compare this last-iterate convergence rate to the one holding for the best iterate. Plugging $h_k = \frac{R}{B} h$ into the rate \eqref{ratebest}, we obtain that \[
\min_{1 \le k \le N+1} f(x^k) - f(x^*) \le BR \Bigl(\tfrac12 \tfrac{1}{(N+1)h} + \tfrac12 h \Big).\]

Using the bounds $2N+2 \le s_{N+1}^2 \le 2N+2+\tfrac12 \log(N)$ from Lemma~\ref{L_RS}, we rewrite the rate for larger steps from Theorem~\ref{T2} in the following slightly weaker but easier to interpret form:
\[f(x^{N+1})-f(x^\star) \leq BR \left( (1 + \tfrac14 \log(N))  h  + \tfrac{1}{4(N+1)h} \right). \]
Finally, when using the standard optimal constant step size $h=\frac{1}{\sqrt{N+1}}$ we obtain 
\[f(x^{N+1})-f(x^\star) \leq \frac{BR}{\sqrt{N+1}} \left( \tfrac54 + \tfrac14 \log(N) \right), \]
which show a logarithmic loss compared to the $\frac{BR}{\sqrt{N+1}}$ rate for the best iterate.

A last interesting remark is that Lemma~\ref{P_SM1} can be used with a reference $\hat{x}$ that is not a minimizer. In essence, it shows that subgradient methods converge to any sublevel set of the objective function with the same worst-case rate, provided the constant $R$ in its numerator is taken as the distance from the initial iterate to that sublevel set.

 % note convegrence approches any level set x^s with same rate

\subsection{Tightness of the convergence rate}\label{Sec_f2}

A convergence rate is said to be exact (or tight) if there exists a problem instance achieving that rate. We now show that the convergence rates we obtained for the subgradient method with constant step sizes are exact.

In the case of shorter steps ($h \le \frac{1}{s_{N+1}^2}$), it is readily seen that the convergence rate in Theorem~\ref{T2} is exact. Indeed, it is enough to consider an unconstrained optimization univariate problem ($n=1$, $X=\mathbb{R}$) with objective function $f(x)=B|x|$ and the initial point $x^1=R$. 

In the case of longer steps  ($h >\frac{1}{s_{N+1}^2}$), the following more involved example illustrates that the convergence rate in Theorem~\ref{T2} is also exact. To establish exactness, we may assume without loss of generality that $R=B=1$. We also use $e_i$ to denote the $i$th unit vector.

\begin{example}
Let $N$ be a number of iterations and  $h > \tfrac{1}{s_{N+1}^2}$. Let $\gamma_1=1$ and define
\begin{align*}
\gamma_k=\sqrt{\prod_{i=1}^{k-1} \left(1-\tfrac{1}{s^4_{N+1-i}} \right) }, \ k\in\{2, \dots, N\}.
\end{align*}
Suppose that $\xi^1, \dots, \xi^{N+1}\in \mathbb{R}^{N+1}$ are given as follows,
\begin{align*}
\xi^k=\tfrac{1}{hs_{N+1}^2}e_1+\sqrt{ 1-\tfrac{1}{h^2s_{N+1}^4} }\left(\sum_{i=2}^{k}  \tfrac{\gamma_{i-1}}{s_{N+2-i}^2} e_i-\gamma_k e_{k+1}\right), \ k\in\{1, \dots, N\},
\end{align*}
and $\xi^{N+1}=\xi^N+2\gamma_{N}\sqrt{ 1-\tfrac{1}{h^2s_{N+1}^4} }e_{N+1}$. By the definition of $\xi^k$, it is seen 
$$
\left\|\xi^k\right\|=1, \ \ k\in\{1, \dots, N+1\}.
$$
For $k<N$ and $k<j$, we have
\begin{align}\label{Ex.2}
\left\langle \xi^k, \xi^j\right\rangle &=\left\langle \xi^k, \xi^k+\gamma_k\left(\tfrac{1}{s^2_{N+1-k}}+1\right)\sqrt{ 1-\tfrac{1}{h^2s_{N+1}^4} } e_{k+1} \right\rangle\\
\nonumber &=1-\gamma_k^2\left(\tfrac{1}{s^2_{N+1-k}}+1\right)\left(1-\tfrac{1}{h^2s_{N+1}^4}\right),
\end{align}
 and $\left\langle \xi^N, \xi^{N+1}\right\rangle=1-2\gamma_N^2\left(1-\tfrac{1}{h^2s_{N+1}^4}\right) $. Let $f: \mathbb{R}^{N+1}\to\mathbb{R}$ given by 
$$
f(x)=\max_{0\leq k\leq N+1} f^k+\langle \xi^k, x-z^k\rangle
$$
where $f^0=0$,
\begin{align*}
f^k=\tfrac{1}{s_{N+1}^2}+\left( 1-\tfrac{1}{h^2 s_{N+1}^4} \right)\sum_{i=1}^{k-1} \gamma_i^2\left( 1+\tfrac{1}{s^2_{N+1-i}}  \right) , \ \ k\in\{1, \dots, N+1\},
\end{align*}
and $\xi^0=z^0=0$, 
\begin{align*}
z^k=e^1-h\sum_{i=1}^{k-1} \xi^i, \ \ k\in\{1, \dots, N+1\}.
\end{align*}
It is readily seen that $\|\xi\|\leq 1$ for any $\xi\in \partial f(x)$ and $x\in\mathbb{R}^{N+1}$. By \eqref{Ex.2}, one can show that $f(0)= 0$. Hence, $0\in\partial f(0)$ and zero is an optimal solution of the following problem,
$$
\min_{x\in\mathbb{R}^{N+1}} f(x).
$$ 
Furthermore, 
\begin{align}
\xi^k\in\partial f(x^k), \ \  k\in\{1, \dots, N\}.
\end{align}
 After some algebraic manipulations, one can show that $f(x^{N+1})=(\tfrac{1}{2} s^2_{N+1}-N)h+ \tfrac{1}{2s^2_{N+1}h}$.   Algorithm \ref{SM} with initial  point $x^1=e_1$ and step size $h$ may generate the following points  
 $$
 x^k=z^k,  g^k=\xi^k \ \ k\in\{2, \dots, N+1\}.
 $$
 Since $f(x^{N+1})=(\tfrac{1}{2} s^2_{N+1}-N)h+ \tfrac{1}{2sq^2_{N+1}h}$, one infers the tightness of the rate for large steps in Theorem \ref{T2}.
\end{example}

\subsection{Optimal constant step size}

 In what follows, we determine the optimal constant step size for Algorithm \ref{SM}, i.e. the value of $h$ that minimizes the rate established in Theorem \ref{T2}, and derive the resulting optimal last-iterate convergence rate for this class of subgradient methods.
 
\begin{theorem}\label{T_O_S}
Let $f$ be a convex function with $B$-bounded sugradients on a convex set $X$. Consider $N$ iterations of Algorithm \ref{SMC} starting from an initial iterate $x^1 \in X$ satisfying $\|x^1 - x^*\| \le R$ for some minimizer $x^*$. The optimal value of the step size parameter $h$ is given by 
  $$
   h^\star=\frac{1}{s_{N+1}\sqrt{s_{N+1}^2-2N}},
  $$
and with that choice the last iterate $x_{N+1}$ satisfies
\[ f(x^{N+1})-f(x^\star) \leq BR \sqrt{1 - \tfrac{2N}{s_{N+1}^2}}. \]
\end{theorem}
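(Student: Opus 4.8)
The plan is to minimize over $h>0$ the piecewise worst-case bound established in Theorem~\ref{T2}; since that bound is shown to be exact in Section~\ref{Sec_f2}, its minimizer is indeed the optimal step-size parameter. Concretely, I would write the bound as $BR\,\phi(h)$ with
\[ \phi(h) = \begin{cases} 1 - Nh & \text{when } h \le \tfrac{1}{s_{N+1}^2}, \\ (\tfrac12 s_{N+1}^2 - N)h + \tfrac{1}{2 s_{N+1}^2 h} & \text{when } h > \tfrac{1}{s_{N+1}^2}, \end{cases} \]
and first note that $\phi$ is continuous at the junction $h=1/s_{N+1}^2$, where both branches equal $1 - N/s_{N+1}^2$.

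On the first branch, $\phi$ is affine and strictly decreasing, so over $(0,1/s_{N+1}^2]$ its infimum is attained only at the right endpoint. On the second branch, $\phi(h)=ah+b/h$ with $a=\tfrac12(s_{N+1}^2-2N)$ and $b=\tfrac{1}{2s_{N+1}^2}$; here $a>0$ because Lemma~\ref{L_RS} gives $s_{N+1}^2\ge 2(N+1)>2N$, and obviously $b>0$. Such a function on $(0,\infty)$ is minimized (by the AM--GM inequality, or by setting the derivative $a-b/h^2$ to zero) at $h=\sqrt{b/a}$ with minimum value $2\sqrt{ab}$. Substituting,
\[ \sqrt{b/a} = \sqrt{\tfrac{1}{s_{N+1}^2\,(s_{N+1}^2-2N)}} = \frac{1}{s_{N+1}\sqrt{s_{N+1}^2-2N}} =: h^\star, \qquad 2\sqrt{ab} = \sqrt{1-\tfrac{2N}{s_{N+1}^2}}. \]

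It then remains to check that $h^\star$ lies in the interior of the second branch, so that it is the global minimizer of $\phi$: the inequality $h^\star>1/s_{N+1}^2$ is, after clearing denominators, equivalent to $s_{N+1}>\sqrt{s_{N+1}^2-2N}$, i.e. to $N>0$, which holds. Since $\phi$ decreases on $(0,1/s_{N+1}^2]$, the second branch decreases on $(1/s_{N+1}^2,h^\star]$ and increases afterwards, the overall minimum of $\phi$ over $h>0$ is $\phi(h^\star)=\sqrt{1-2N/s_{N+1}^2}$, attained at $h^\star$; multiplying by $BR$ yields both claims. I do not anticipate any real difficulty here, as this is a one-variable optimization of the $ah+b/h$ type; the only points needing a little care are the strict positivity $s_{N+1}^2>2N$ (supplied by Lemma~\ref{L_RS}) and the verification that the unconstrained minimizer of the second branch indeed falls in the regime $h>1/s_{N+1}^2$ where that branch is active.
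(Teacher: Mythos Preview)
Your proposal is correct and follows essentially the same approach as the paper: minimize the piecewise bound of Theorem~\ref{T2} over $h>0$, observing that the first branch is decreasing so the minimum occurs on the second branch where the derivative vanishes. Your write-up is in fact more detailed than the paper's, which simply notes that $H$ is differentiable, convex, and decreasing on its first piece, and then solves $H'(h^\star)=0$; your explicit checks (continuity at the junction, $s_{N+1}^2>2N$ via Lemma~\ref{L_RS}, and $h^\star>1/s_{N+1}^2$) are all fine and make the argument self-contained.
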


\begin{proof}
  To get the optimal step size suffices to minimize the function
  $H:\mathbb{R}_+\to \mathbb{R}$ given by
  $$
  H(h)=
  \begin{cases}
       1-Nh & h\in [0,\tfrac{1}{s_{N+1}^2})  \\
     (\tfrac{1}{2} s^2_{N+1}-N)h+ \tfrac{1}{2s^2_{N+1}h} & h\in [\tfrac{1}{s_{N+1}^2}, \infty).
   \end{cases}
  $$
  It is readily seen than $H$ is a differentiable convex function, decreasing on its first piece, and the above minimizer $h^*$ can be found by solving $H^\prime(h^\star)=0$ on the second piece.
  
 Plugging this optimal $h^*$ in the rate Theorem~\ref{T2} completes the proof.
\end{proof}
A simpler, slightly weaker bound is obtained using $1 - \frac{2N}{s_{N+1}^2} = \frac{s_{N+1}^2-2N}{s_{N+1}^2} \le \frac{2+\frac12 \log(N)}{2N+2}$, leading to \[  f(x^{N+1})- f(x^\star) \leq \frac{BR}{\sqrt{N+1}} \sqrt{1 + \tfrac14 \log(N)} = O(\sqrt{\tfrac{\log N}{N}}) \]
which emphasizes the logarithmic loss compared to the best-iterate convergence rate.

%Thanks to the optimal step size introduced in Theorem \ref{T_O_S} and Lemma \ref{L_RS}, one can obtain a convergence rate of $O(\sqrt{\tfrac{\log N}{N}})$ for the subgradient method with constant step size at each iteration with respect to the final iterate.

\section{Subgradient method with constant step lengths}\label{Sec_f3}

Identifying a bound $B$ on the maximum norm of any subgradient may be difficult. Alternatively, one can adapt the subgradient method from Algorithm \ref{SM} such that constant step lengths are used. We express this constant step length as a fraction of the initial distance $t R$, for some value of $t > 0$. Since the length of a step is equal to $\| h_k g_k\|$, this implies the choice of step sizes $h_k = \frac{t R}{\| g_k\|}$ for each $k$. Algorithm \ref{SM_L} below presents the subgradient method with constant step length.

\begin{algorithm}
\caption{Projected subgradient method with constant step lengths}
\begin{algorithmic}
\smallskip
\State \textbf{Parameters:} number of iterations $N$, step length parameter $t>0$
\smallskip
\State \textbf{Inputs:} convex set $X$, convex function $f$ defined on $X$ with $B$-bounded subgradients, initial iterate $x^1 \in X$ satisfying $\|x^1 - x^*\| \le R$ for some minimizer $x^*$.
\smallskip
%\State \textbf{Parameters:} Define a number of iterations $N$ and sequence of step sizes $\{ h_k \}_{1 \le k  \le N}$.
\State For $k=1, 2, \ldots, N$ perform the following steps:\\
\begin{enumerate}
\item
Select a subgradient $g^k\in \partial f(x^k)$.
\item
Compute $x^{k+1}=P_X\left(x^k- t \frac{R}{\|g^k\|} g^{k}\right)$.
\end{enumerate}
\smallskip
\State \textbf{Output:} last iterate $x_{N+1}$ 
\end{algorithmic}
\label{SM_L}
\end{algorithm}

We give below a convergence rate for Algorithm \ref{SM_L} by using Lemma \ref{P_SM}.

\begin{theorem}\label{T_c_s_l}
Let $f$ be a convex function with $B$-bounded sugradients on a convex set $X$. Consider $N$ iterations of Algorithm \ref{SM_L} with step length parameter $t>0$, starting from an initial iterate $x^1 \in X$ satisfying $\|x^1 - x^*\| \le R$ for some minimizer $x^*$. 
We have that the last iterate $x_{N+1}$ satisfies the following rate
  \begin{enumerate}[i)]
   \item
   If  $t\in (0, \tfrac{1}{s_{N+1}^2}]$, then
 \begin{align*}
f(x^{N+1})-f(x^\star)\leq BR\left( 1-Nt \right).
\end{align*}
   \item
 If  $t\in [\tfrac{1}{s_{N+1}^2}, \infty)$, then
\begin{align*}
f(x^{N+1})-f(x^\star)\leq BR\left( (\tfrac{1}{2} s^2_{N+1}-N)t+ \tfrac{1}{2s^2_{N+1}t}\right).
\end{align*}
 \end{enumerate}
\end{theorem}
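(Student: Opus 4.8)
The plan is to reproduce, for constant step lengths, the two-level argument used for Theorem~\ref{T2}. First I would prove the following constant-step-length analogue of Lemma~\ref{P_SM1}: for every reference point $\hat x\in X$ with $\|x^1-\hat x\|\le R$ and every $\alpha\ge 1$, the last iterate of Algorithm~\ref{SM_L} satisfies
\[ f(x^{N+1})-f(\hat x)\le BR\left(\tfrac12\left(s_{\alpha,N+1}\sqrt{t}-\tfrac{1}{s_{\alpha,N+1}\sqrt{t}}\right)^2+1-Nt\right). \]
Given this, Theorem~\ref{T_c_s_l} follows exactly as Theorem~\ref{T2} follows from Lemma~\ref{P_SM1}, taking $\hat x=x^\star$: when $t\le\frac1{s_{N+1}^2}$, Proposition~\ref{P_RS} provides $\alpha\ge 1$ with $s_{\alpha,N+1}\sqrt{t}=\frac{1}{s_{\alpha,N+1}\sqrt{t}}$, killing the square and leaving $BR(1-Nt)$; when $t\ge\frac1{s_{N+1}^2}$, taking $\alpha=1$ and expanding the square gives $BR\big((\frac12 s_{N+1}^2-N)t+\frac1{2s_{N+1}^2 t}\big)$.

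To establish the displayed inequality I would invoke Lemma~\ref{P_SM} with this $\hat x$, with $h_{N+1}=\frac{tR}{B}$ and $v_{N+1}=\alpha$, and with $v_0,\dots,v_N$ chosen so that the coefficient $c_k$ of $f(x^k)$ in \eqref{UR_SM} vanishes for every $k\le N$. Writing $S_k=\sum_{i=k}^{N+1}h_iv_i$, a short computation shows $c_k=0$ is equivalent to $v_kS_{k+1}=v_{k-1}S_k$, so all products $v_kS_{k+1}$ are equal, with common value $v_NS_{N+1}=v_Nh_{N+1}v_{N+1}$. I therefore set $v_N=\frac1\alpha$, which makes this common value equal to $\frac{tR}{B}$ (which is then also $c_{N+1}$), and define $v_0,\dots,v_{N-1}$ recursively downward by $v_{k-1}=\frac{tR}{B\,S_k}$. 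Because the step sizes $h_k=\frac{tR}{\|g^k\|}$ are no longer constant, these weights now depend on the realized subgradient norms (I may assume $\|g^k\|>0$ for $k\le N$, since a zero subgradient makes $x^k$ optimal and the claim trivial).

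The key observation that makes everything collapse is that, setting $r_k:=1/v_k$, the relations $v_{k-1}S_k=\frac{tR}{B}=v_kS_{k+1}$ together with $S_k-S_{k+1}=h_kv_k=\frac{tR}{\|g^k\|}v_k$ rearrange into
\[ r_{k-1}=r_k+\frac{B}{\|g^k\|}\cdot\frac{1}{r_k},\qquad k=1,\dots,N, \]
a perturbation of the recursion \eqref{Rseq} defining $\{s_{\alpha,k}\}$ with the extra factor $\frac{B}{\|g^k\|}\ge 1$. Since $r_N=\alpha=s_{\alpha,1}$ and $x\mapsto x+\frac1x$ is increasing on $[1,\infty)$, a one-line induction gives $r_{N-j}\ge s_{\alpha,j+1}$, i.e. $v_k\le \frac{1}{s_{\alpha,N+1-k}}$ for $k=0,\dots,N$; the same recursion shows $0<r_N<r_{N-1}<\dots<r_0$, hence $0<v_0\le\cdots\le v_N\le v_{N+1}$ (the last inequality using $\alpha\ge1$), so Lemma~\ref{P_SM} applies. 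Plugging in, the left side of \eqref{UR_SM} is $\frac{tR}{B}\big(f(x^{N+1})-f(\hat x)\big)$; on the right side I bound the distance term by $\frac{v_0^2}{2}R^2\le\frac{R^2}{2s_{\alpha,N+1}^2}$, use the identity $h_k^2v_k^2\|g^k\|^2=t^2R^2v_k^2\le\frac{t^2R^2}{s_{\alpha,N+1-k}^2}$ for $k\le N$ (this is precisely where the constant step length removes the $\|g^k\|$-dependence), and bound the last term by $h_{N+1}^2v_{N+1}^2B^2=t^2R^2\alpha^2$; summing and using Proposition~\ref{P_RS}(b) to write $\alpha^2+\sum_{j=1}^N s_{\alpha,j}^{-2}=s_{\alpha,N+1}^2-2N$ yields exactly the claimed bound.

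The only genuine obstacle I anticipate is the bookkeeping forced by the non-constant step sizes: one must spot the substitution $r_k=1/v_k$ that turns the system $\{c_k=0\}$ into a clean $\|g^k\|$-perturbed copy of the $\{s_{\alpha,k}\}$ recursion. Once that is in hand, positivity, monotonicity, the termwise bound $v_k\le 1/s_{\alpha,N+1-k}$, and the final algebraic simplification are all routine and parallel the constant-step-size proof verbatim.
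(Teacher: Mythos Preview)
Your proposal is correct and follows essentially the same route as the paper's own proof: choose $h_{N+1}=\tfrac{tR}{B}$, $v_{N+1}=\alpha$, define the remaining $v_k$ recursively by $v_k=h_{N+1}/\sum_{i=k+1}^{N+1}h_iv_i$ (which is exactly your condition $v_kS_{k+1}=\tfrac{tR}{B}$), observe that this forces all $c_k$ with $k\le N$ to vanish and $c_{N+1}=v_0\sum h_kv_k=\tfrac{tR}{B}$, and then use $\|g^k\|\le B$ together with monotonicity of $x\mapsto x+\tfrac1x$ on $[1,\infty)$ to dominate $1/v_k$ by $s_{\alpha,N+1-k}$. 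Your substitution $r_k=1/v_k$ and the explicit separation into a step-length analogue of Lemma~\ref{P_SM1} followed by the $\alpha$-selection argument are cosmetic repackagings of the same computation; the paper does both steps inside a single proof but with identical ingredients.
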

\begin{proof}
We employ Lemma \ref{P_SM} to establish the theorem. Let $h_{N+1}=\tfrac{tR}{B}$ and $v_{N+1}={\alpha}$ for some $\alpha\geq 1$. Suppose that $h_k=\tfrac{tR}{\|g^k\|}$, $k\in\{1, \dots, N\}$. We define $v^k$ recursively as follows,
\begin{align}\label{fff}
v_k=\frac{h_{N+1}}{\sum_{i=k+1}^{N+1} h_iv_i},  \ \ \ \ \ k\in\{N, \dots, 1, 0\}.
\end{align}
It is seen that $0<v_0\leq v_1\leq \dots \leq v_N\leq v_{N+1}$. For $k\in\{1, \dots, N\}$, we have 
\begin{align*}
h_kv_{k}^2-(v_k-v_{k-1})\sum_{i=k}^{N+1}  h_iv_{i}=v_{k-1}\sum_{i=k}^{N+1}  h_iv_{i}-v_k\sum_{i=k+1}^{N+1}  h_iv_{i}=0.
\end{align*}
Furthermore,
 $$
 h_{N+1}v_{N+1}^2-(v_{N+1}-v_{N}) h_{N+1}v_{N+1} =h_{N+1}, \ \ v_0 \sum_{k=1}^{N+1}  h_kv_{k}=h_{N+1}.
 $$
 On the other hand, by \eqref{fff}, we get $v_{N}=\tfrac{1}{\alpha}$ and
 \begin{align*}
\tfrac{1}{v_k}=\tfrac{h_{k+1}}{h_{N+1}}v_{k+1}+\tfrac{1}{v_{k+1}}\geq v_{k+1}+\tfrac{1}{v_{k+1}}, \ \ \ k\in\{0, \dots, N-1\},
\end{align*}
where the last inequality follows from $\|g^{k+1}\|\leq B$. Since function $\mu: [1, \infty)\to \mathbb{R}$ given by $\mu(o)=\gamma o+\tfrac{1}{o}$ for $\gamma\geq 1$ is increasing on its domain, one can infer by induction that  
 \begin{align*}
v_k\leq \frac{1}{s_{\alpha,N+1-k}}, \ \ \ k\in\{0, \dots, N\}.
\end{align*}
Hence, by using Lemma \ref{P_SM}, we obtain
 \begin{align*}
f(x^{N+1})-f(x^\star)&\leq \tfrac{v_0^2}{2h_{N+1}}\left \|x^1-x^\star\right\|^2+\tfrac{t^2R^2}{2h_{N+1}}\sum_{k=1}^N v_k^2 +\tfrac{h_{N+1}B^2v_{N+1}^2}{2}\\
& \leq BR\left( \tfrac{1}{2}\left(s_{\alpha, N+1} \sqrt{t}-\tfrac{1}{s_{\alpha, N+1} \sqrt{t}}\right)^2+1-Nh\right),
\end{align*}
where the last inequality resulted from Proposition \ref{P_RS}. The rest of  the proof is analogous to Theorem \ref{T2}. 
\end{proof}

%%%%%%%%%%%%%%%%%%%%%%%%%%%%%%%%%%%%%%%%%%%%%%%%%%%%%%%%%%%%

%%%%%%%%%%%%%%%%%%%%%%%%%%%%%%%%%%%%%%%%%%%%%%%%%%%%%%%%%%%%
\section{Optimal subgradient methods}\label{Sec_opt}
\subsection{Lower bound on last-iterate convergence rate}
The convergence rate of any black-box method that relies on subgradients cannot be arbitrarily small. More precisely, for any method that moves at each iteration in a direction belonging to the span of the current and past subgradients, it is known that the accuracy of last iterate must obey a lower bound of the order $\Omega(\frac{1}{\sqrt{N}})$.
Nesterov \cite[Theorem 3.2.1]{Nesterov} proposes a Lipschitz continuous function $f$ with modulus $B>0$, for which any subgradient method that calls the first-order oracle $N$ times satisfies
$$
f(x^{N+1})-f(x^\star)\geq \frac{BR}{2(2+\sqrt{N+1})},
$$
where $\|x^1-x^\star\|\leq R$. Drori and Teboulle \cite{drori2016optimal} improved the above-mentioned lower bound and proposed the following lower bound
\begin{align}\label{Lower}
f(x^{N+1})-f(x^\star)\geq \frac{BR}{\sqrt{N+1}}.
\end{align}

A subgradient method for which the last-iterate accuracy would match this lower bound $\Omega(\frac{1}{\sqrt{N}})$ is certainly desirable \cite{shamir2012open}. Recently, Jain et al. \cite{jain2019making} introduced such a subgradient method when the feasible set $X$ is bounded. Indeed, they \cite[Theorem 2.6]{jain2019making} derive the following convergence rate for their proposed algorithm
$$
f(x^N+1)-f(x^\star)\leq \frac{15BD}{\sqrt{N+1}},
$$
where $D=\max_{x,y\in X} \|x-y\|$.

\subsection{Optimal subgradient method}
In this section, we introduce Algorithm \ref{OSM}, a subgradient method based on a new sequence of step sizes for which the last-iterate convergence rate matches the lower bound \eqref{Lower}.

\begin{algorithm}
\caption{Optimal projected subgradient method}
\begin{algorithmic}
\smallskip
\State \textbf{Parameters:} number of iterations $N$
\smallskip
\State \textbf{Inputs:} convex set $X$, convex function $f$ defined on $X$ with $B$-bounded subgradients, initial iterate $x^1 \in X$ satisfying $\|x^1 - x^*\| \le R$ for some minimizer $x^*$.
\smallskip
%\State \textbf{Parameters:} Define a number of iterations $N$ and sequence of step sizes $\{ h_k \}_{1 \le k  \le N}$.
\State For $k=1, 2, \ldots, N$ perform the following steps:\\
\begin{enumerate}
\item
Select a subgradient $g^k\in \partial f(x^k)$.
\item
Compute $x^{k+1}=P_X\left(x^k- h_k  g^{k}\right)$ using step size step $h_k=\tfrac{R(N+1-k)}{B\sqrt{(N+1)^3}}$.
\end{enumerate}
\smallskip
\State \textbf{Output:} last iterate $x_{N+1}$ 
\end{algorithmic}
\label{OSM}
\end{algorithm}

In what follows, we establish that Algorithm \ref{OSM} attains the optimal rate of convergence. 
Indeed, the subsequent theorem presents a convergence rate for Algorithm \ref{OSM} by employing Lemma \ref{P_SM}.

\begin{theorem}\label{T_OSM}
Let $f$ be a convex function with $B$-bounded sugradients on a convex set $X$. Consider $N$ iterations of Algorithm \ref{OSM} starting from an initial iterate $x^1 \in X$ satisfying $\|x^1 - x^*\| \le R$ for some minimizer $x^*$. 
We have that the last iterate $x_{N+1}$ satisfies
\begin{align}\label{R_SMO}
f(x^{N+1})-f(x^\star)\leq \frac{BR}{\sqrt{N+1}}.
\end{align}
\end{theorem}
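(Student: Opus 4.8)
### Proof Proposal

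The plan is to apply Lemma~\ref{P_SM} with the specific step sizes $h_k = \frac{R(N+1-k)}{B\sqrt{(N+1)^3}}$ of Algorithm~\ref{OSM}, the reference point $\hat x = x^\star$, and a carefully chosen weight sequence $\{v_k\}$ that collapses all the coefficients $c_k = h_kv_k^2 - (v_k-v_{k-1})\sum_{i=k}^{N+1} h_iv_i$ to zero for $k \le N$, leaving only the term in $f(x^{N+1})$. Following the recipe sketched after Lemma~\ref{P_SM} (all parameters are pinned down once we fix $v_{N+1}$, $h_{N+1}$ and $c_{N+1}$), I expect the natural choice here to be $v_k$ constant in $k$ — or more precisely something like $v_k = 1$ for $k \le N$ together with a suitable value of $v_{N+1}$ and of the phantom step $h_{N+1}$ — since the step sizes themselves are already affine in $k$, which is exactly the structure that makes the telescoping $\sum_{i=k}^{N+1} h_i$ easy to evaluate in closed form. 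Concretely, $\sum_{i=k}^{N+1} h_i = \frac{R}{B\sqrt{(N+1)^3}} \sum_{i=k}^{N+1}(N+1-i)$, which is a triangular number, and this is what will let the left-hand side of \eqref{UR_SM} reduce to a single clean term proportional to $f(x^{N+1}) - f(x^\star)$.

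Once the weights are fixed and the left-hand side is reduced to $c_{N+1}\bigl(f(x^{N+1}) - f(x^\star)\bigr)$, the remaining work is to bound the right-hand side $\tfrac{v_0^2}{2}\|x^1 - x^\star\|^2 + \tfrac12 \sum_{k=1}^{N+1} h_k^2 v_k^2 \|g^k\|^2$ using $\|x^1 - x^\star\| \le R$ and $\|g^k\| \le B$, and then divide through by $c_{N+1}$. So the proof amounts to three computations: (i) verify $0 < v_0 \le v_1 \le \cdots \le v_{N+1}$ so Lemma~\ref{P_SM} applies; (ii) verify $c_k = 0$ for $k = 1,\dots,N$ and compute $c_{N+1} = v_0 \sum_{k=1}^{N+1} h_k v_k$; (iii) evaluate $\sum_{k=1}^{N+1} h_k^2 v_k^2$, which with the affine step sizes is $\frac{R^2}{B^2 (N+1)^3}\sum_{k=1}^{N+1}(N+1-k)^2$, i.e. a sum of squares, evaluable in closed form. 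Assembling these and simplifying should produce exactly $\frac{BR}{\sqrt{N+1}}$; the appearance of $\sum (N+1-k)^2 \sim (N+1)^3/3$ against the $(N+1)^3$ in the denominator is what makes the $\sqrt{N+1}$ scaling come out, and the constant should work out to exactly $1$ rather than something larger.

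The main obstacle is guessing the correct weight sequence $\{v_k\}$ (and the value of the unused $h_{N+1}$) that makes the coefficient identities $c_k = 0$ hold for all $k \le N$; everything after that is bookkeeping with arithmetic, triangular-number, and sum-of-squares formulas. A secondary point to be careful about is that Lemma~\ref{P_SM} requires $v_0 \le v_1 \le \cdots \le v_{N+1}$, so whatever sequence makes the coefficients vanish must also turn out to be nondecreasing — this needs to be checked rather than assumed. Finally, one should confirm that the constant that emerges is exactly $1$ (not merely $O(1)$), since matching the lower bound \eqref{Lower} with the sharp constant is the whole point of the theorem.
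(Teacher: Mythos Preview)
Your overall strategy---apply Lemma~\ref{P_SM} with $\hat x=x^\star$ and choose the weights $\{v_k\}$ so that $c_k=0$ for $k\le N$---is exactly the paper's approach. However, your tentative guess that $v_k$ should be \emph{constant} for $k\le N$ cannot work and is the genuine gap in the proposal. With $v_0=v_1=\cdots=v_N$ you have $v_k-v_{k-1}=0$ for every $k\le N$, hence $c_k=h_kv_k^2\ne 0$; Lemma~\ref{P_SM} then degenerates to the standard inequality~\eqref{C_ineq}, which bounds only the best/average iterate, not the last one. Forcing $c_k=0$ for $k\le N$ requires a \emph{strictly increasing} weight sequence.

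The paper takes $v_k=\dfrac{(N+1)^{3/4}}{N+1-k}\sqrt{B/R}$ for $k=0,\dots,N$, together with $v_{N+1}=v_N$ and $h_{N+1}=\dfrac{R}{B(N+1)^{3/2}}$. The guiding observation is that this makes the product $h_kv_k$ \emph{constant} in $k$ (since $h_k\propto N+1-k$ and $v_k\propto 1/(N+1-k)$), so that $\sum_{i=k}^{N+1}h_iv_i=(N+2-k)\cdot\mathrm{const}$; a one-line check then gives $h_kv_k^2=(v_k-v_{k-1})\sum_{i=k}^{N+1}h_iv_i$, i.e.\ $c_k=0$ for $k\le N$, and $c_{N+1}=v_0\sum h_kv_k=1$. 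With these weights $h_k^2v_k^2$ is itself constant, so the right-hand side is not a sum of squares $\sum(N+1-k)^2$ as you anticipated; it evaluates directly to $\tfrac{B}{2R\sqrt{N+1}}R^2+\tfrac{R}{2B(N+1)^{3/2}}\sum_{k=1}^{N+1}B^2=\dfrac{BR}{\sqrt{N+1}}$. Once the correct weights are in hand, your three verification steps (monotonicity, the coefficient identities, and the right-hand side bound) are indeed all that remains.
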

\begin{proof}
Suppose that $v_k$'s are given as follows,
$$
v_k=\frac{(N+1)^{\tfrac{3}{4}}}{N+1-k}\sqrt{\frac{B}{R}}, \ \ \ \ \ k\in \{0, \dots, N\},
$$
and $v_{N+1}=v_N$. It is seen that $0<v_0\leq v_1\leq \dots \leq v_{N}\leq v_{N+1}$. In addition, let $h_{N+1}=\tfrac{R}{B\sqrt{(N+1)^3}}$. For $k\in\{1, \dots, N\}$, we have 
$$
 h_kv_{k}^2 -(v_k-v_{k-1})\sum_{i=k}^{N+1}  h_iv_{i} =\tfrac{1}{N+1-k}-(N+2-k)(\tfrac{1}{N+1-k}-\tfrac{1}{N+2-k})=0.
$$
In addition,
$$
v_0 \sum_{k=1}^{N+1}  h_kv_{k}=\tfrac{1}{N+1} \sum_{k=1}^{N+1}  1=1, \ \ \ h_{N+1}v_{N+1}^2=1.
$$
By  Lemma \ref{P_SM}, we get
\begin{align*}
f(x^{N+1})-f(x^\star) & \leq  \tfrac{R}{2B\sqrt{(N+1)^3}}\sum_{i=1}^{N+1} \left\|g^i\right\|^2+
\tfrac{B}{2R\sqrt{N+1}}\left\|x^1-x^\star\right\|^2\\
& \leq  \tfrac{R}{2B\sqrt{(N+1)^3}}\sum_{i=1}^{N+1} B^2+
\tfrac{B}{2R\sqrt{N+1}}R^2=\frac{BR}{\sqrt{N+1}},
\end{align*}
 and the proof is complete.
\end{proof}

\subsection{Absence of optimal sugradient method with universal sequence of step sizes}
It is seen that the step sizes in Algorithm \ref{OSM} are dependent on the number of iterations, $N$. As conjectured in \cite{jain2019making}, it is not possible to introduce $\{h_k\}$ for which \eqref{R_SMO} holds for any arbitrary $N$. Before we show this point, we need to present a lemma.  

\begin{lemma}\label{L_last}
Consider Algorithm \ref{SM} with $h_1=\tfrac{1}{2\sqrt{2}}$ and $N=2$. 
\begin{enumerate}[i)]
  \item
 If $h_2\in (0, \tfrac{1}{8\sqrt{2}}]$, then there exists $f\in\mathcal{F}(\mathbb{R}^2)$ with $1$-bounded sugradients and $x^1$ and such that 
 $$
 f(x^3)-f^\star=\tfrac{1}{\sqrt{2}}-h_2,
 $$
 where $\|x^1-x^\star\|\leq 1$.
 
  \item
  If $h_2\in (\tfrac{1}{8\sqrt{2}}, \infty)$, then there exists $f\in\mathcal{F}(\mathbb{R}^3)$ with $1$-bounded sugradients and $x^1$ and such that 
 $$
 f(x^3)-f^\star=h_2+\tfrac{1}{64h_2}+\tfrac{16h_2}{(1+8\sqrt{2}h_2)^2},
 $$
 where $\|x^1-x^\star\|\leq 1$.

\end{enumerate}
\end{lemma}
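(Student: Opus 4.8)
The plan is to establish each case by exhibiting an explicit worst-case instance, in the same spirit as the construction of Section~\ref{Sec_f2}. By homogeneity we may assume $R=B=1$; the objective $f$ will be a maximum of finitely many affine functions with unit-norm slopes, we take $x^1=e_1$ and $x^\star=0$ with $f^\star=f(0)=0$, and the first-order oracle is allowed to return a prescribed unit subgradient at each queried point, so that Algorithm~\ref{SM} is forced to follow the trajectory $x^1\mapsto x^2=x^1-h_1g^1\mapsto x^3=x^2-h_2g^2$. The number of affine pieces, and hence the ambient dimension, depends on the regime of $h_2$.

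For case~(i), note that $h_1=\frac{1}{2\sqrt 2}$ is exactly the optimal step size for a single iteration (Theorem~\ref{T_O_S} with $N=1$ gives $h^\star=\frac{1}{s_2\sqrt{s_2^2-2}}=\frac{1}{2\sqrt 2}$), so there is a three-piece instance in $\mathbb{R}^2$, of the type used in Section~\ref{Sec_f2}, for which $f(x^2)-f^\star=\frac{1}{\sqrt 2}$ (this being the tight one-iteration value from Theorems~\ref{T2}--\ref{T_O_S}). I would choose this instance so that the affine piece active at $x^2$ is the one with slope $g^2$, and so that this same piece keeps attaining the maximum all along the segment from $x^2$ to $x^2-h_2g^2$ for every $h_2\le\frac{1}{8\sqrt 2}$; this is checked by comparing that piece against the two others and against $f^\star=0$, and it is precisely at $h_2=\frac{1}{8\sqrt 2}$ that $x^3$ would leave this piece. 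Since the piece is affine with unit slope, $f(x^3)=f(x^2)-h_2\|g^2\|^2=\frac{1}{\sqrt 2}-h_2$.

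For case~(ii) the second step overshoots and a genuine two-step instance is needed, which requires one more dimension. I would take $f(x)=\max_{0\le k\le 3}\bigl(f^k+\langle\xi^k,x-z^k\rangle\bigr)$ on $\mathbb{R}^3$ with $\xi^0=0$, $z^k=x^k$, $x^1=e_1$, $x^2=x^1-h_1\xi^1$, $x^3=x^2-h_2\xi^2$; the free data are the unit vectors $\xi^1,\xi^2,\xi^3$ (equivalently their Gram matrix) together with the offsets $f^1,f^2,f^3$. These are fixed by imposing that piece~$k$ attains the maximum at $x^k$ for $k=1,2$ (so that $\xi^k\in\partial f(x^k)$), that the constant piece~$0$ attains the maximum at $0$ (so that $f^\star=0$ and $0\in\partial f(0)$), that $\|x^1-x^\star\|=1$, and that piece~$3$ attains the maximum at $x^3$. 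One then reads off $f(x^3)=f^3$ and checks, after simplification, that $f^3=h_2+\frac{1}{64h_2}+\frac{16h_2}{(1+8\sqrt 2\,h_2)^2}$. This construction is the two-step analogue of the worst-case instance of Section~\ref{Sec_f2} with the two step lengths $h_1$ and $h_2$ now allowed to differ, and the factor $1+8\sqrt 2\,h_2=1+4h_2/h_1$ reflects the coupling between the two steps once $\xi^2$ is forced to be a subgradient at $x^2$; note that at $h_2=\frac{1}{8\sqrt 2}$ this value equals $\frac{7\sqrt 2}{16}=\frac{1}{\sqrt 2}-\frac{1}{8\sqrt 2}$, so the two constructions agree at the boundary of the two regimes.

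The main obstacle is the bookkeeping in case~(ii): choosing the Gram matrix of $\xi^1,\xi^2,\xi^3$ and the offsets $f^1,f^2,f^3$ so that all the subgradient-inclusion and optimality constraints hold simultaneously, the Gram matrix is positive semidefinite (so the vectors are realizable in $\mathbb{R}^3$), and the resulting value $f^3$ collapses to exactly the stated three-term expression --- reproducing the $(1+8\sqrt 2\,h_2)^{-2}$ term is the delicate part. This amounts to solving a small performance-estimation semidefinite program, together with its dual, by hand; once the right ansatz is written down, the remaining verifications are routine algebra of the same type as in Section~\ref{Sec_f2}.
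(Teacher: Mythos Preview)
Your approach is correct and essentially coincides with the paper's. For (i) the paper uses the same instance you describe---the $N{=}1$ case of Section~\ref{Sec_f2}---but presented in rotated coordinates as $f(x)=\max\{x_1-1,x_2-1,-1\}$ with $x^1=\tfrac{1}{\sqrt 2}(1,1)$ and $g^1=e_1$, $g^2=e_2$, where the check is immediate; note that in this instance the $\xi^2$-piece in fact stays active for all $h_2\le\tfrac{1}{2\sqrt 2}$, so your claim that it switches precisely at $\tfrac{1}{8\sqrt 2}$ is inaccurate but harmless. For (ii) the paper carries out exactly the four-piece construction you outline, parametrizing the unit vectors via $\gamma=\tfrac{32h_2}{(1+8\sqrt 2\,h_2)^2}$ (the common first coordinate of $\xi^1,\xi^2,\xi^3$), after which the verifications you anticipate are indeed routine.
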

\begin{proof}
First we establish $i)$. Let $f\in\mathcal{F}(\mathbb{R}^2)$ be given by 
$$
f(x)= \max\left\{x_1-1, x_2-1 , -1 \right\}.
$$
It is readily seen that $x^\star=0$ is an optimal solution to problem $\min \ f(x)$. Algorithm \ref{SM} with initial point $x^1=\tfrac{1}{\sqrt{2}}(1, 1)^T$ may generate the following points
$$
x^2=x^1-\tfrac{1}{2\sqrt{2}}e_1,  \ \ \ x^3=x^1-\tfrac{1}{2\sqrt{2}}e_1-h_2 e_2.
$$
In addition, $f(x^3)-f^\star=\tfrac{1}{\sqrt{2}}-h_2$ and we introduce an optimization problem with the desired properties. 

Now, we prove $ii)$. Let $\gamma=\tfrac{32h_2}{(1+8\sqrt{2}h_2)^2}$. One can show that $\gamma\in [0, 1]$. Suppose that
{\small{
\begin{align*}
\xi^1=
\begin{pmatrix}
\gamma\\ -\sqrt{1-\gamma^2}\\ 0
\end{pmatrix},
\xi^2=
\begin{pmatrix}
\gamma\\ \tfrac{\sqrt{1-\gamma^2}}{8\sqrt{2}h_2}\\ -\sqrt{ (1-\gamma^2)(1-\tfrac{1}{128h_2^2}) } 
\end{pmatrix},
\xi^3=
\begin{pmatrix}
\gamma\\ \tfrac{\sqrt{1-\gamma^2}}{8\sqrt{2}h_2}\\ \sqrt{ (1-\gamma^2)(1-\tfrac{1}{128h_2^2}) }
\end{pmatrix}.
\end{align*}
}}
It is readily seen that $\|\xi^k\|=1$, $i\in\{1, 2, 3\}$. Let $z^1=e_1$ and 
$$
z^2=e_1-\tfrac{1}{2\sqrt{2}}\xi^1, \ \ \ 
z^3=e_1-\tfrac{1}{2\sqrt{2}}\xi^1-h_2\xi^2.
$$
Consider the following linear functions
\begin{align*}
\alpha_1(x)=\gamma+&\langle \xi^1, x-z^1\rangle,\ \ \ \ 
\alpha_2(x)=\gamma+\tfrac{1-\gamma^2}{32h_2}-\tfrac{\gamma^2}{2\sqrt{2}}+\langle \xi^2, x-z^2\rangle,\\
&\alpha_3(x)=h_2+\tfrac{1}{64h_2}+\tfrac{16h_2}{(1+8\sqrt{2}h_2)^2}+\langle \xi^3, x-z^3\rangle.
\end{align*}
We define $f:\mathbb{R}^3\to\mathbb{R}$ by 
$$
f(x)=\max\left\{0, \alpha_1(x), \alpha_2(x), \alpha_3(x)\right\}.
$$ 
One can show that $x^\star=0$ is an optimal solution to problem $\min \ f(x)$. By doing some algebra, one can check that Algorithm \ref{SM} with initial point $x^1=e_1$ may generate the following points
$x^2=z^2$ and  $x^3=z^3$. It is seen that $f(x^3)-f(x^\star)=h_2+\tfrac{1}{64h_2}+\tfrac{16h_2}{(1+8\sqrt{2}h_2)^2}$, and the proof is complete. 
\end{proof}

Now, we present an argument why there does not exist a sequence $\{h_k\}$ that satisfies the convergence rate \eqref{R_SMO} for any arbitrary $N$. By contradiction, assume that there exists such a $\{h_k\}$. For the convenience suppose that $B=R=1$. Due to the exactness of rates given in Theorem \ref{T2}, we have $h_1=\tfrac{1}{2\sqrt{2}}$. By Lemma \ref{L_last}, one can infer that a convergence rate of Algorithm \ref{SM} with $h_1=\tfrac{1}{2\sqrt{2}}$ and $N=2$ cannot be lower than $o=0.5785$. Note that $o$ is  computed by solving $\min_{h\geq 0} H(h)$, where $H$ is given by
$$
H(h)=
\begin{cases}
    \tfrac{1}{\sqrt{2}}-h & h\in [0, \tfrac{1}{8\sqrt{2}}] \\
    h+\tfrac{1}{64h}+\tfrac{16h}{(1+8\sqrt{2}h)^2} & h\in ( \tfrac{1}{8\sqrt{2}}, \infty) 
\end{cases}.
$$
On the other hand, $o>0.5775>\tfrac{1}{\sqrt{3}}$. Hence, it is not possible to have $\{h_k\}$ for which \eqref{R_SMO} holds for any $N$ in the setting of Algorithm \ref{SM}.

As seen the optimal sizes depend on the number of iterations in the setting of Algorithm \ref{SM}. We conjecture that the incorporation of suitable momentum terms in the subgradient method may lead to a universal optimal algorithm whose convergence rate of $O\left(\tfrac{BR}{\sqrt{N+1}}\right)$ would hold for all iterations. %Moreover, in this algorithm, the step sizes are determined independently of the number of iterations.

\subsection{Optimal projected subgradient method using step lengths}
In the last part of this section, we present Algorithm \ref{OSM_s_l}, an optimal subgradient method based on step lengths. 

\begin{algorithm}
\caption{Optimal projected subgradient method (step lengths)}
\begin{algorithmic}
\smallskip
\State \textbf{Parameters:} number of iterations $N$
\smallskip
\State \textbf{Inputs:} convex set $X$, convex function $f$ defined on $X$ with $B$-bounded subgradients, initial iterate $x^1 \in X$ satisfying $\|x^1 - x^*\| \le R$ for some minimizer $x^*$.
\smallskip
%\State \textbf{Parameters:} Define a number of iterations $N$ and sequence of step sizes $\{ h_k \}_{1 \le k  \le N}$.
\State For $k=1, 2, \ldots, N$ perform the following steps:\\
\begin{enumerate}
\item
Select a subgradient $g^k\in \partial f(x^k)$.
\item
Compute $x^{k+1}=P_X\left(x^k- t_k \frac{g^k}{\|g^k\|} \right)$ with $t_k=\tfrac{R(N+1-k)}{\sqrt{(N+1)^3}}$.
\end{enumerate}
\smallskip
\State \textbf{Output:} last iterate $x_{N+1}$ 
\end{algorithmic}
\label{OSM_s_l}
\end{algorithm}

In the forthcoming theorem, we provide a convergence rate for Algorithm  \ref{OSM_s_l}.

\begin{theorem}
Let $f$ be a convex function with $B$-bounded sugradients on a convex set $X$. Consider $N$ iterations of Algorithm \ref{OSM_s_l} starting from an initial iterate $x^1 \in X$ satisfying $\|x^1 - x^*\| \le R$ for some minimizer $x^*$. 
We have that the last iterate $x_{N+1}$ satisfies
 \begin{align}
f(x^{N+1})-f(x^\star)\leq \frac{BR}{\sqrt{N+1}}.
\end{align}
\end{theorem}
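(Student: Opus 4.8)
The plan is to apply Lemma~\ref{P_SM} with $\hat x = x^\star$, mirroring the proof of Theorem~\ref{T_OSM} while handling the dependence of the effective step sizes $h_k = \tfrac{t_k}{\|g^k\|} = \tfrac{R(N+1-k)}{\|g^k\|\sqrt{(N+1)^3}}$ on the subgradient norms exactly as the proof of Theorem~\ref{T_c_s_l} adapts the step-size analysis to step lengths. Concretely, I would take $h_{N+1} = \tfrac{R}{B\sqrt{(N+1)^3}}$, set $v_{N+1} = v_N = (N+1)^{3/4}\sqrt{B/R}$, and define the remaining weights by the backward recursion $\tfrac{1}{v_k} = h_{k+1}v_{k+1} + \tfrac{1}{v_{k+1}}$ for $k = N-1,\dots,0$. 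A short computation (equivalently, verifying that $v_k\sum_{i=k+1}^{N+1}h_iv_i = 1$ for all $k\le N$) shows that this choice yields $0 < v_0 \le \dots \le v_{N+1}$, makes the coefficient of $f(x^k)$ in \eqref{UR_SM} vanish for every $k \le N$, and leaves the coefficient of $f(x^{N+1})$ and of $f(\hat x)$ both equal to $1$. Lemma~\ref{P_SM} then gives $f(x^{N+1}) - f^\star \le \tfrac{v_0^2}{2}\|x^1-x^\star\|^2 + \tfrac{1}{2}\sum_{k=1}^{N+1}h_k^2v_k^2\|g^k\|^2$.

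The core of the argument is the bound $v_k \le w_k := \tfrac{(N+1)^{3/4}}{N+1-k}\sqrt{B/R}$ for $k = 0,\dots,N$, i.e.\ each weight is dominated by the weight the step-size method of Theorem~\ref{T_OSM} would use. I would establish this by downward induction. Since $\|g^{k+1}\| \le B$, the step size obeys $h_{k+1} \ge \bar h_{k+1} := \tfrac{R(N-k)}{B\sqrt{(N+1)^3}}$ — precisely the step size of Algorithm~\ref{OSM} — and one checks that $\{w_k\}$ satisfies the matching recursion $\tfrac{1}{w_k} = \bar h_{k+1}w_{k+1} + \tfrac{1}{w_{k+1}}$ with the same starting value $w_N = (N+1)^{3/4}\sqrt{B/R}$. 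The inductive step then amounts to the observation that, for $k \le N-1$, the scalar map $o \mapsto \bar h_{k+1}o + \tfrac{1}{o}$ is nonincreasing on $(0, \bar h_{k+1}^{-1/2}]$, an interval that contains both $v_{k+1}$ and $w_{k+1}$ (the containment of $w_{k+1}$ reducing to $N-k\ge 1$); hence $v_{k+1}\le w_{k+1}$ propagates to $\tfrac{1}{v_k} \ge \bar h_{k+1}v_{k+1} + \tfrac{1}{v_{k+1}} \ge \bar h_{k+1}w_{k+1} + \tfrac{1}{w_{k+1}} = \tfrac{1}{w_k}$, which is the same type of monotonicity trick used in the proof of Theorem~\ref{T_c_s_l}.

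With $v_k \le w_k$ in hand, the conclusion follows as in Theorem~\ref{T_OSM}: using $h_k\|g^k\| = t_k$ for $k\le N$ and $\|g^{N+1}\|\le B$, every term $h_k^2v_k^2\|g^k\|^2$ with $k\le N$ is at most $t_k^2w_k^2 = \tfrac{BR}{(N+1)^{3/2}}$, the term for $k = N+1$ is at most $h_{N+1}^2v_{N+1}^2B^2 = \tfrac{BR}{(N+1)^{3/2}}$, and $\tfrac{v_0^2}{2}\|x^1-x^\star\|^2 \le \tfrac{w_0^2R^2}{2} = \tfrac{BR}{2\sqrt{N+1}}$; adding these gives $f(x^{N+1}) - f^\star \le \tfrac{BR}{2\sqrt{N+1}} + \tfrac{1}{2}(N+1)\tfrac{BR}{(N+1)^{3/2}} = \tfrac{BR}{\sqrt{N+1}}$. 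The main obstacle I anticipate is the inductive weight inequality $v_k\le w_k$ — in particular identifying the correct interval of monotonicity of $o\mapsto\bar h_{k+1}o+1/o$ and checking that both $v_{k+1}$ and $w_{k+1}$ lie in it; the degenerate case $\|g^k\|=0$, where $x^k$ is already a minimizer and the bound holds trivially, should be dealt with separately.
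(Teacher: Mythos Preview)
Your proposal is correct and follows essentially the same route as the paper: define the weights recursively so that all coefficients but the last vanish in Lemma~\ref{P_SM}, then compare them by downward induction to the weights $w_k$ from Theorem~\ref{T_OSM} using $\|g^k\|\le B$, and conclude exactly as there. The monotonicity argument you spell out (that $o\mapsto \bar h_{k+1}o+1/o$ is nonincreasing on $(0,\bar h_{k+1}^{-1/2}]$ and that $w_{k+1}$ lies in this interval since $N-k\ge 1$) is precisely the missing detail behind the paper's ``one can infer by induction that $u_k\le v_k$''.
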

\begin{proof}
The proof is analogous to that of Theorem \ref{T_c_s_l}. Let 
$$
u_{N+1}=(N+1)^{\tfrac{3}{4}} \sqrt{\tfrac{B}{R}}, \ \ \ h_{N+1}=\tfrac{R}{B\sqrt{(N+1)^3}},
$$
and let $h_k=\tfrac{t_k}{\|g^k\|}$, $k\in\{1, \dots, N\}$. Let us define  $u^k$ recursively in the following manner,
\begin{align}
u_k=\frac{1}{\sum_{i=k+1}^{N+1} h_iu_i},  \ \ \ \ \ k\in\{N, \dots, 1, 0\}.
\end{align}
It is readily seen that $0<u_0\leq u_1\leq \dots \leq u_N\leq u_{N+1}$, and
\begin{align*}
h_ku_{k}^2-(u_k-u_{k-1})\sum_{i=k}^{N+1}  h_iu_{i}=u_{k-1}\sum_{i=k}^{N+1}  h_iu_{i}-u_k\sum_{i=k+1}^{N+1}  h_iu_{i}=0, \ \ \ k\in\{1, \dots, N\}.
\end{align*}
In addition,
 $$
 h_{N+1}u_{N+1}^2-(u_{N+1}-u_{N}) h_{N+1}u_{N+1} =1, \ \ u_0 \sum_{k=1}^{N+1}  h_ku_{k}=1.
 $$
 Consider $v_k$ given in the proof of Theorem \ref{T_OSM}. As 
 \begin{align*}
v_k=\frac{1}{\sum_{i=k+1}^{N+1} v_ih_i\tfrac{\|g^k\|}{B}},  \ \ \ \ \ k\in\{0, 1, \dots, N\},
\end{align*}
 one can infer by induction that $u_k\leq v_k$, $k\in\{0, 1, \dots, N+1\}$. Thus, by Lemma \ref{P_SM}, we get
 \begin{align*}
f(x^{N+1})-f(x^\star)&\leq \tfrac{u_0^2}{2}\left \|x^1-x^\star\right\|^2+\tfrac{1}{2}\sum_{k=1}^{N} h_k^2u_k^2\left\|g^k\right\|^2+\tfrac{1}{2} h_{N+1}^2u_{N+1}^2B^2 \\
& =\tfrac{u_0^2}{2}\left \|x^1-x^\star\right\|^2+\tfrac{1}{2}\sum_{k=1}^{N} \left( \tfrac{t_k}{B}\right)^2u_k^2B^2+\tfrac{1}{2} h_{N+1}^2u_{N+1}^2B^2\\
& \leq \tfrac{B}{2R\sqrt{N+1}}R^2+\tfrac{R}{2B\sqrt{(N+1)^3}}\sum_{i=1}^{N+1} B^2
=\frac{BR}{\sqrt{N+1}},
\end{align*}
where the last inequality follows from $u_k\leq v_k$, $k\in\{0, 1, \dots, N+1\}$, and the proof is complete. 
\end{proof}
%%%%%%%%%%%%%%%%%%%%%%%%%%%%%%%%%%%%%%%%%%%%%%%%%%%%%%%%%%%%%%

\section*{Conclusion}
Before concluding, we briefly explain how most of the theorems in this paper were initially discovered. We used the performance estimation (PEP) methodology \cite{drori2014performance, taylor2017smooth}, which allowed us to compute numerically the exact last-iterate convergence rate of subgradient methods applied to convex functions with bounded sugradients. Assuming $B=R=1$ without loss of generality, the values of the worst case accuracy for several choices of $N$ and $h$ were matched with explicit analytical expressions, which required some guesswork including the introduction of the sequence $\{ s_k \}$. The next step was to use the numerical values of the dual multipliers to identify PEP-style proofs of those convergence rates, then to guess analytical expressions for those multipliers. Finally, we observed that large parts of the obtained proofs could be simplified by rewriting them as Jensen-type inequalities. After further simplifications, this ultimately leads to the proof technique that was exposed in Section~\ref{Sec_1}.

To summarize, we have provided in this paper new convergence rates for the subgradient method with constant step sizes and constant step lengths, and proved their tightness. Additionally, we have presented two optimal subgradient methods that attains the most favorable convergence rate achievable among subgradient algorithms. As avenues for future research, it would be valuable to investigate the convergence analysis of the (stochastic) proximal subgradient method with respect to the last iterate by employing some result analogous to Lemma \ref{P_SM}. Moreover, deriving tighter convergence rates for the stochastic subgradient method may be of interest. 
%%%%%%%%%%%%%%%%%%%%%%%%%%%%%%%

\bibliographystyle{siamplain}
\bibliography{references}
\end{document}